\newtheorem{theorem}{Theorem}
\newtheorem{lemma}{Lemma}
\newtheorem{remark}{Remark}
\newtheorem{assumption}{Assumption}
\numberwithin{equation}{section}
\numberwithin{figure}{section}
\title{Exponential Runge-Kutta schemes for inhomogeneous Boltzmann equations with high
order of accuracy\thanks{Research supported by Research Project of National Interest (PRIN 2009) \emph{Advanced numerical methods for kinetic equations and balance laws with source terms}.}}
\author{Qin Li\thanks{Department of Mathematics, University of Wisconsin-Madison, WI, USA}, Lorenzo
Pareschi\thanks{Department of Mathematics, University of Ferrara, Italy}}
\date{}
\begin{document}
\maketitle

\begin{abstract}
We consider the development of exponential methods for the robust time discretization of space inhomogeneous Boltzmann equations in stiff regimes. Compared to the space homogeneous case, or more in general to the case of splitting based methods, studied in Dimarco Pareschi \cite{DP_ExpRK} a major difficulty is that the local Maxwellian equilibrium state is not constant in a time step and thus needs a proper numerical treatment. We show how to derive asymptotic preserving (AP) schemes of arbitrary order and in particular using the Shu-Osher representation of Runge-Kutta methods we explore the monotonicity properties of such schemes, like strong stability preserving (SSP) and positivity preserving. Several numerical results confirm our analysis.

\end{abstract}

{\bf Keywords:} Exponential Runge-Kutta methods, stiff equations,
Boltzmann equation, fluid limits, asymptotic preserving schemes, strong stability preserving schemes.

\tableofcontents

\section{Introduction}
The time discretization of kinetic equations in stiff regimes represents a computational challenge in the construction of numerical methods. In fact, in regimes close to the fluid-dynamic limit the collisional scale becomes dominant over the transport of particles and forces the numerical methods to operate with time discretization steps of the order of the Knudsen number. On the other hand the use of implicit integration techniques presents considerable limitations in most applications since the cost required for the inversion of the collisional operator is prohibitive therefore limiting such techniques to simple linear operators.

In recent years there has been a remarkable development of numerical techniques specifically designed for such situations \cite{BLM, Filbet, toscani, DP_ExpRK, dpimex, Lem, PR}. The basic idea common to these techniques is to avoid the resolution of small time scales by using some a priori knowledge on the asymptotic behavior of the kinetic equation. In particular, we recall among the different possible approaches domain decomposition strategies and hybrid methods at different levels \cite{CPmc, degond1, dimarco1, dimarco2, TK}.

Asymptotic-preserving schemes have been particularly successful in the construction of unconditionally stable time discretization methods that avoids the inversion of the collision operator. For a nice survey on asymptotic-preserving scheme for various kinds of systems see, for example, the review paper by Shi Jin \cite{jinrev}. In the case of Boltzmann kinetic equations we also refer to the recent review by Pareschi and Russo \cite{prrev}.

In this paper we propose a new class of exponential integrators for the inhomogeneous Boltzmann
equation and related kinetic equations which is based on explicit exponential Runge-Kutta methods \cite{EX1, MZ}. More precisely we extend the method recently presented by one of the authors for homogeneous Boltzmann equations \cite{DP_ExpRK} to the inhomogeneous case by avoiding splitting techniques.
The main feature of the approach here proposed is that it works uniformly with very high-order for a wide range of
Knudsen numbers and avoids the solution of nonlinear systems of equations even in stiff regimes. Compared to penalized Implicit-Explicit (IMEX) techniques \cite{Filbet, dpimex} the main advantage of the class of methods here presented is the capability to easily achieve high order accuracy, asymptotic preservation and monotonicity of the numerical solution.

At variance with the approach presented in Dimarco, Pareschi \cite{DP_ExpRK} here we used the Shu-Osher representation of Runge-Kutta methods \cite{ShuOsher_ENO}. This turns out to be essential in order to obtain non splitting schemes with better monotonicity properties (usually referred to as strong stability properties \cite{GST}), which permits for example to obtain positivity preserving schemes.
In particular we construct methods which are uniformly accurate using two different strategies. The first class of methods is based on the use of a suitable time independent equilibrium state which permits to recover high order accuracy and positivity of the numerical solution. However since the method is based on a constant equilibrium computed at the end time it may suffer of accuracy deterioration in intermediate regimes. The second class of methods is based on computing explicitly the time variation of the Maxwellian state. This permits to obtain schemes with better uniform accuracy but loosing some of the monotonicity property obtained with the first technique.

The rest of the manuscript is organized as follows. In the next section we introduce some preliminary material concerning the Bolztmann equation and its fluid-limit. In Section 3 we derive the novel asymptotic-preserving exponential Runge-Kutta schemes. Two different approaches are presented. The properties of the two approaches are then studied in Section 4. In particular monotonicity properties are investigated. Finally in Section 5 several numerical results for schemes up to third order are presented which show the uniform high order accuracy properties of the present methods. 
Some theoretical proofs are reported in a separate appendix.

\section{The Boltzmann Equation and its fluid-dynamic limit}
\subsection{Boltzmann Equation}
The Boltzmann equation describes the evolution of the density distribution of
rarefied gases. We use $f(t,x,v)$ to represent the distribution function at
time $t$ on the phase space $(x,v)$. The Boltzmann equation is given by 
\begin{equation}\label{eqn_Boltzmann}
\partial_t f +v\cdot\nabla_x f=\frac1{\varepsilon}Q(f,f)\text{, }t\ge 0\text{,
}\left(x,v\right)\in \mathbb{R}^d\times\mathbb{R}^d,
\end{equation}
with
\begin{equation}\label{DefQ}
Q(f,f)=Q^+-fQ^-=\int_{S^{d-1}}\int_{\mathbb{R}^d} (f' f'_{*}-f f_{*}) B(|v-v_*|,{\omega})dv_* d{\omega}.
\end{equation}
Here, $B$ is the collision kernel, $\varepsilon>0$ is the Knudsen number, $\omega$ is a unit vector, and $S^{d-1}$ is the unit
sphere defined in $R^d$ space.
We use the shorthands $f'=f(t,x,v')$ and
$f'_{*}=f(t,x,v'_*)$.
There are many variations for the collision kernel $B$. One simple
case is the case of Maxwell molecules when 
$$B=B\left(\frac{g\cdot\omega}{|g|}\right),$$ with the relative velocity 
$g=v-v_*$.\\
The collisional velocities $v'$ and $v'_*$ satisfy
\begin{subequations}\label{VelocityAfterCollision}
    \begin{align}
        v'&=v-\frac{1}{2}(g-|g|\omega),\\
      v_*'&=v_*+\frac{1}{2}(g-|g|\omega).
    \end{align}
\end{subequations}
This deduction is based on momentum and energy conservations
\begin{align*}
v+v_*&=v'+v'_{*},\\
|v|^2+|v_*|^2&=|v'|^2+|v'_{*}|^2.
\end{align*}

In $d$-dimensional space, we define the following macroscopic quantities 
$\rho$ is the mass density (here we assume mass is 1, thus number density
and mass density have the same value); $u$ is a $d$-dimensional vector that
represent the average velocity; $E$ is the total energy; $e$ is the
specific internal energy; $T$ is the temperature; $S$ is the stress tensor; and $q$ is the heat flux vector, given by 
\begin{align}
    &\rho=\int f dv,
&\rho {u}=\int {v}f dv,\nonumber\\
&E=\frac{1}{2}\rho u^2+\rho e=\frac{1}{2}\int |{v}|^2fdv,
&e=\frac{d}{2}T=\frac{1}{2\rho}\int f|{v}-{u}|^2dv,\\
&S=\int ({v-u})\otimes({v-u})fdv,
&{q}=\frac{1}{2}\int ({v}-{u})|{v}-{u}|^2fdv.\nonumber
\end{align}

\subsection{Conservations and fluid limit}
Cross section may vary, but the first $d+2$ moments of the collision term are always
zero. They are obtained by multiplying the collision term with
$\phi=\left(1,v,\frac{1}{2}|v|^2\right)^T$ and then integrating with respect
to $v$, i.e. 
\begin{align}\label{eqn_momentsMaxwell}
<Q>&=\int Q(f)dv=0,\nonumber\\
<vQ>&=\int {v} Q(f) dv=0,\nonumber\\
<\frac{1}{2}v^2Q>&=\int \frac{1}{2}|v|^2Q(f)d{v}=0.
\end{align}
Based on these formulas, when taking moments of the Boltzmann equation,
one obtains mass, momentum and energy conservation 
\begin{align}\label{MacroEvolveSingle}
& \partial_t \rho+\nabla_x\cdot(\rho{u})=<Q>=0,\nonumber\\
& \partial_t (\rho {u})+\nabla_x\cdot(S+\rho {u}^2)=\frac{1}{\varepsilon}<vQ>=0,\nonumber\\
&\partial_t E+\nabla_x\cdot(E{u}+S{u}+q)=\frac{1}{\varepsilon}<\frac{1}{2}|v|^2Q>=0.\nonumber
\end{align}

For small values of $\varepsilon$, the standard
Chapman-Enskog expansion around the local Maxwellian
\begin{equation}\label{def_Maxwellian}
    M(t,x,v)=\rho(t,x)\left(\frac{1}{2\pi T(t,x)}\right)^{d/2}\exp{\left(-\frac{(v-u(t,x))^2}{2T(t,x)}\right)},
\end{equation}
shows that at the leading order the moment system yields its Euler limit
\begin{align}\label{EulerLimit}
& \partial_t \rho+\nabla\cdot (\rho{u})=0,\nonumber\\
& \partial_t (\rho {u})+\nabla \cdot(\rho {u}\otimes {u}+\rho {T}\mathbb{I})=0,\\
&\partial_t E+\nabla \cdot((E+\rho {T}){u})=0,\nonumber
\end{align}
where $\mathbb{I}$ is the identity matrix.

\section{Exponential Runge-Kutta (ExpRK) methods}
In this section we would like to extend the Exponential RK method in 
\cite{DP_ExpRK} for the homogeneous Boltzmann equation to the inhomogeneous 
case (\ref{eqn_Boltzmann}). It has been known for long that time splitting 
methods degenerate to first order accuracy in the fluid-limit (see \cite{DP_ExpRK} and the references therein) so, to achieve 
high order of accuracy in stiff regimes, time splitting should be avoided.

\subsection{Reformulation of the problem and notations}
To achieve AP property and robustness in stiff regimes, an implicit method should be adopted. However, due to the
complexity and nonlocal property of the collision term $Q$, directly inverting
it is prohibitively expensive. The Exponential Runge-Kutta method overcomes this
difficulty by transforming the equation into the exponential form, and forces
the solution to approach to the equilibrium that captures its asymptotic
Euler limit as $\varepsilon$ tends to zero, thus it is an AP scheme. Following
the approach in \cite{DP_ExpRK}, one can define
\begin{align}
    P=Q+\mu f\label{def_P}, \hspace{0.5cm}\mu>0.
\end{align}
Let us now consider a nonnegative function $\tilde M$, hereafter called the \emph{equilibrium function}, and using (\ref{eqn_Boltzmann}) compute 
\begin{eqnarray}\label{eqn_FullExpForm}
    &&\partial_t \left[(f-\tilde{M})e^{\mu t/\varepsilon}\right]\nonumber\\
    & = &\partial_t (f-\tilde{M}) e^{\mu t/\varepsilon}+(f-\tilde{M})\frac{\mu}{\varepsilon}e^{\mu t/\varepsilon}\nonumber\\
    & = &\left[\frac{1}{\varepsilon}(Q+\mu f-\mu \tilde{M})-\partial_t\tilde{M}-v\cdot\nabla_xf\right]e^{\mu t/\varepsilon}\\\nonumber.
    & = &\left[\frac{1}{\varepsilon}(P-\mu \tilde{M})-\partial_t\tilde{M}-v\cdot\nabla_xf\right]e^{\mu t/\varepsilon}\nonumber.
\end{eqnarray}
Note that the equation above is equivalent to the original Boltzmann equation 
(\ref{eqn_Boltzmann}) as long as $\mu$ is independent on time. In the simplified case of the BGK collision operator $Q=\mu(M-f)$, where $M$ is the local Maxwellian given by (\ref{def_Maxwellian}), the problem reformulation just described applies with $P=\mu M$. Moreover  
there is no requirement on the form of $\tilde{M}$ at all -- it can be an 
arbitrary function. However, to obtain AP property, one has to be careful in 
picking up its definition, so that the correct
asymptotic limit could be captured.

We analyze two different approaches in the following
two subsections, and adopt a suitable explicit Runge-Kutta scheme to solve them.
For readers' convenience, we firstly give the expression of the Runge-Kutta method used
here. Given a large set of ODEs
\begin{equation}\label{eqn_classicalRK}
    \partial_t y=F(t,y),
\end{equation}
obtained for example using the method of lines from a given PDE, 
if data $y^n$ at time step $t^n$ is known, to compute for the value $y^{n+1}$
at $t^{n+1}=t^n+h$, a classical $\nu$-step explicit Runga-Kutta scheme for equation (\ref{eqn_classicalRK})
writes 
\begin{equation}
\begin{cases}
    \text{Step $i$:}\hspace{1cm}&\displaystyle y^{n,(i)}=y^{n}+h\sum^{i-1}_{j=1}a_{ij}F(t^n+c_jh,y^{n,(j)}),\label{scheme_ExpRKk_classic}\\
    \text{Final step:}\hspace{1cm}&\displaystyle y^{n+1}=y^{n}+h\sum^\nu_ib_iF(t^n+c_ih,y^{n,(i)}),
\end{cases}
\end{equation}
where $\sum_{j=1}^{i-1}a_{ij}=c_i$, $\sum_ib_i=1$, and $y^{n,(i)}$ stands for the
estimate of $y$ at $t=t^n+c_ih$. Different Runge-Kutta method gives
different set of coefficients. In the sequel we drop superscript $n$ for 
evaluation of $y$ at sub-stages and use $y^{(i)}=y^{n,(i)}$.

Another form of RK method which has proved to be useful in the analysis of the monotonicity properties of Runge-Kutta schemes is the so-called {\em Shu-Osher representation}
\cite{ShuOsher_ENO}. This representation is essential in the study of the positivity properties that will be carried out later
\begin{equation}\label{scheme_RKPosForm}
\begin{cases}
    \text{Step $i$:}\hspace{1cm} &\displaystyle y^{(i)}=\sum_{j=1}^{i-1}\left[\alpha_{ij}y^{(j)}+h\beta_{ij}F(t^n+c_jh,y^{(j)})\right],\\
    \text{Final step:}\hspace{1cm} &\displaystyle 
    y^{n+1}=\sum_{j=1}^{\nu}\left[\alpha_{\nu+1 j}y^{(j)}+h\beta_{\nu+1 
    j}F(t^n+c_jh,y^{(j)})\right].\\
\end{cases}
\end{equation}
Let us point out that this latter representation is not unique. Here 
$\alpha_{ij}$ are parameters such that $\sum_{j=1}^{i-1}\alpha_{ij}=1$. 
Without loss of generality, it is natural to set
\begin{eqnarray}
\beta_{ij}=\alpha_{ij}\left(c_i-c_j\right),
\label{eq:relations}
\end{eqnarray}
for consistency.
\begin{remark}
    Expression (\ref{eq:relations}) is equivalent with the classical one which says \cite{ShuOsher_ENO}
    \begin{equation}\beta_{ij}=a_{ij}-\sum_{k=j+1}^{i-1}\alpha_{ik}a_{kj}.\end{equation} In fact, 
    assume one has $y^{(j)}=y^n+h\sum_{k=1}^{j-1}a_{jk}F^{(k)}$, $\forall\,\, j<i$, where $F^{(k)}$ is a shorthand for $F(t^n+c_kh,y^{(k)})$, 
    then, by (\ref{eq:relations}) one has
    \begin{align}
        y^{(i)}&=\sum_{j=1}^{i-1}\left[\alpha_{ij}y^{(j)}+\alpha_{ij}(c_i-c_j)hF^{(j)}\right]\nonumber\\
               &=\sum_{j<i}\left[\alpha_{ij}\left(y^n+h\sum_{k<j}a_{jk}F^{(k)}\right)+\alpha_{ij}(c_i-c_j)hF^{(j)}\right]\nonumber\\
               &=y^n+h\sum_{j<i}\left(\sum_{k=j+1}^{i-1}\alpha_{ik}a_{kj}+\alpha_{ij}(c_i-c_j)\right)F^{(j)}
    \end{align}
    This clearly requires 
    $a_{ij}=\alpha_{ij}(c_i-c_j)+\sum\alpha_{ik}a_{kj}$. Given (\ref{eq:relations}), it 
    is $a_{ij}=\beta_{ij}+\sum\alpha_{ik}a_{kj}$, which is exactly the 
    classical Shu-Osher representation.
\end{remark}

\subsection{Exponential RK schemes with fixed equilibrium function}
Since the choice of the equilibrium function $\tilde{M}$ in (\ref{eqn_FullExpForm}) is arbitrary, in this subsection, we assume $\tilde{M}$ as a function independent of time in
each time step, i.e. $\tilde{M}$ is a function given a-priori. Thus
(\ref{eqn_FullExpForm}) could be rewritten as 
\begin{eqnarray}\label{eqn_fixedMExpForm}
    \partial_t \left[(f-\tilde{M})e^{\mu t/\varepsilon}\right]=\left[\frac{1}{\varepsilon}(P-\mu \tilde{M})-v\cdot\nabla_xf\right]e^{\mu t/\varepsilon}.
\end{eqnarray}
    Analytically, the equation (\ref{eqn_fixedMExpForm}) is equivalent to
    the original inhomogeneous Boltzmann equation as long as $\tilde{M}$ is a function independent of time and $\mu$ is a constant.
    But the associated numerical scheme can preserve asymptotic limit only if
    $\tilde{M}$ is chosen in a correct way, as will be clearer later. On the other hand $\mu$ plays a role in order to guarantee positivity of the numerical solution as will be seen in section. 
\begin{remark}
    Obviously $\tilde{M}$ is required not to change in each time step. But for
    different time steps, we are free to use different functions. This is in
    fact what we will do, we evolve $\tilde{M}$ before each time step with a suitable scheme, and then use
    this computed value function to construct the AP exponential scheme.
\end{remark}
\subsubsection{The numerical scheme: ExpRK-F}
Compared to (\ref{eqn_classicalRK}), $y$ turns out to be
$(f-\tilde{M})e^{\mu t/\varepsilon}$ and the associated evolution function $F(t,y)$
on the right of (\ref{eqn_classicalRK}) is
$\left[\frac{1}{\varepsilon}(P-\mu \tilde{M})-v\cdot\nabla_x f\right]e^{\mu t/\varepsilon}$
. Thus we have the following scheme 
\begin{equation}\label{scheme_ExpRK}
    \begin{cases}
        \text{Step $i$:}\hspace{1cm} &(f^{(i)}-\tilde{M})e^{c_i\lambda}=(f^{n}-\tilde{M})+\displaystyle\sum^{i-1}_{j=1}a_{ij}\frac{h}{\varepsilon}\left[ P^{(j)}-\mu \tilde{M} -\varepsilon v\cdot\nabla_xf^{(j)}\right]e^{c_j\lambda},\\
    \text{Final Step:}\hspace{.5cm} &(f^{n+1}-\tilde{M})e^{\lambda}=(f^{n}-\tilde{M})+\displaystyle\sum_{i=1}^\nu b_i\frac{h}{\varepsilon}\left[ P^{(i)}-\mu \tilde{M} -\varepsilon v\cdot\nabla_xf^{(i)}\right]e^{c_{i}\lambda}.
    \end{cases}
\end{equation}
where we used $\lambda=\frac{\mu h}{\varepsilon}$, and $P^{(j)}=P(f^{(j)})$
for simplicity. Simple algebra gives 
\begin{itemize}\label{scheme_ExpRKFinal}
    \item{Step $i$:}\begin{equation*}
            f^{(i)}=\left(1-e^{-c_i\lambda}-\sum_{j=1}^{i-1}a_{ij}\lambda e^{\lambda(-c_i+c_j)}\right)\tilde{M}+e^{-c_i\lambda}f^{n}+\sum_{j=1}^{i-1}a_{ij}\lambda e^{\lambda(c_j-c_i)}\left(\frac{P^{(j)}}{\mu}-\frac{\varepsilon}{\mu}v\cdot\nabla_xf^{(j)}\right),
    \end{equation*}
\item{Final Step:}
    \begin{equation*}
        f^{n+1}=\left(1-e^{-\lambda}-\sum_ib_i\lambda e^{\lambda(-1+c_i)}\right)\tilde{M}+e^{-\lambda}f^{n}+\sum_ib_i\lambda e^{\lambda(c_i-1)}\left(\frac{P^{(i)}}{\mu}-\frac{\varepsilon}{\mu}v\cdot\nabla_xf^{(i)}\right).
\end{equation*}
\end{itemize}

\subsubsection{Choice and evaluation of $\tilde{M}$}
If it is assumed that
\begin{equation}
0=c_1<c_2<\ldots < c_\nu < 1,
\label{eq:assum}
\end{equation}
then the same arguments used in \cite{DP_ExpRK} shows immediately, that as $\varepsilon\to 0$, $\lambda\to\infty$, the
scheme pushes $f^{n+1}$ going to $\tilde{M}$. So to obtain AP property, $\tilde{M}$
above should be the Maxwellian at time level $n+1$ that has the right moments. To get the right moments,
the simplest way is to evolve the corresponding macroscopic limit equations, say
the Euler equation. We propose solving the Euler equation first to obtain the
macroscopic quantities of the Maxwellian for the next time step, and make use of them to
define $\tilde{M}$. To achieve high order for all regimes, both the
macro-solver and micro-solver should be handled by numerical schemes with the 
same order of accuracy in space and time. The most natural way in time 
discretization is the explicit Runge-Kutta scheme using the same coefficients 
as the one for the kinetic equation 
    \begin{equation}
        \begin{cases}
            \text{Step }i:\hspace{0.5cm}&\left(\begin{array}{c}\rho\\\rho u\\E\end{array}\right)^{(i)}=
        \left(\begin{array}{c}\rho\\\rho u\\E\end{array}\right)^n-\Delta
        t\displaystyle\sum_{j=1}^{i-1}a_{ij}\nabla_x\cdot\left(\begin{array}{c}\rho
            u\\\rho u\otimes u+\rho T\\\left(E+\rho T\right)u\end{array}\right)^{(j)},\\
            \text{Final Step:}\hspace{0.5cm}&\left(\begin{array}{c}\rho\\\rho u\\E\end{array}\right)^{n+1}=
        \left(\begin{array}{c}\rho\\\rho u\\E\end{array}\right)^n-\Delta
        t\displaystyle\sum_{i=1}^{\nu}b_i\nabla_x\cdot\left(\begin{array}{c}\rho
            u\\\rho u\otimes u+\rho T\\\left(E+\rho T\right)u\end{array}\right)^{(i)}.
        \end{cases}
    \end{equation}
\begin{remark}~
\begin{itemize}
\item Note that this method gives us a simple way to couple 
    macro-solver with micro-solver. When $\varepsilon$ is considerably big, 
    the accuracy of the method is controlled by the micro-solver. And as 
    $\varepsilon$ vanishes, the method pushes $f$ going to $M$, which is 
    defined by macroscopic quantities computed through the Euler equation 
    while the order of accuracy is given by the macro-solver.
\item In principle it is possible to adopt other strategies to compute a more 
    accurate time independent equilibrium function in intermediate regions. 
    For example one can use the $ES-BGK$ Maxwellian \cite{Filbet3} at time 
    $n+1$ or one can use the Navier-Stokes equation as the macro-counterpart. 
    Here however we do not explore further in these directions.
\item The assumption (\ref{eq:assum}), although strongly simplifies 
    computations, is in fact not necessary to prove asymptotic 
    preservation. In fact such assumption is independent of the structure of 
    the operator $P(f,f)$. We refer to Section 4.2 for more details.  
\end{itemize}
\end{remark}

\subsection{Exponential Runge-Kutta schemes with time varying equilibrium function}
The approach just described has the nice feature of being extremely simple to 
construct and implement. As we will see in the next section it also possesses 
several nice features concerning monotonicity. On the other hand it is clear 
that choosing the limiting equilibrium state in the construction may produce a 
lack of accuracy in intermediate regimes. To overcome this aspect here we 
consider the most natural choice of equilibrium function, namely the local 
Maxwellian equilibrium state $\tilde M=M$. The major difficulty in this case 
is due to the time dependent nature of such equilibrium function.

Now rewrite the equation as
\begin{align}\label{eqn_FullExpForm-V}
    \partial_t\left[\left(f-M\right)\exp{\left(\frac{\mu
t}{\varepsilon}\right)}\right]=\left(\frac{P-\mu
    M}{\varepsilon}-v\cdot\nabla_xf-\partial_tM\right)\exp{\left(\frac{\mu t}{\varepsilon}\right)},
\end{align}
and here we define $\tilde{M}$ has a Gaussian profile that shares the same
first $d+2$ moments with $f$. The moments'
equations are governed by 
\begin{align}\label{eqn_Exp2Moment}
    \partial_t\int\phi f dv+\int\phi v\cdot\nabla_xfdv=0,
\end{align}
with $\phi=\left[1,v,\frac{v^2}{2}\right]^T$.
\subsubsection{The numerical scheme: ExpRK-V}
The Runge-Kutta method is adopted to solve the system 
\begin{align*}
    \begin{cases}
    \displaystyle\partial_t (f-{M})e^{\mu t/\varepsilon}&= \displaystyle\frac{1}{\varepsilon}(P-\mu {M}-\varepsilon v\cdot\nabla_xf-\varepsilon\partial_tM)e^{\mu t/\varepsilon},\\[+.2cm]
    \displaystyle\partial_t\int\phi fdv&=-\int\phi v\cdot\nabla_x fdv.
    \end{cases}
\end{align*}
Thus we have the following scheme 
\begin{subequations}\label{scheme_M2}
\begin{description}
        \item[]{Step $i$:}
            \begin{equation}\label{scheme_M2StepK}\begin{cases}
                (f^{(i)}-M^{(i)})e^{c_i\lambda}& =(f^{n}-M^n)+\displaystyle\sum^{i-1}_{j=1}a_{ij}\frac{h}{\varepsilon}\left[ P^{(j)}-\mu M^{(j)} -\varepsilon v\cdot\nabla_xf^{(j)}-\varepsilon\partial_tM^{(j)}\right]e^{c_{j}\lambda},\\
                           \int \phi f^{(i)}dv&  =\int\phi f^ndv+\displaystyle\sum^{i-1}_{j=1}a_{ij}\left(-h\int\phi v\cdot\nabla_xf^{(j)}dv\right);\end{cases}
        \end{equation}
        \item[]{Final Step:}
            \begin{equation}\label{scheme_M2Final}\begin{cases}
                    (f^{n+1}-M^{n+1})e^{\lambda}&=(f^{n}-M^n)+\displaystyle\sum_{i=1}^\nu b_i\frac{h}{\varepsilon}\left[ P^{(i)}-\mu M^{(i)} -\varepsilon v\cdot\nabla_xf^{(i)}-\varepsilon\partial_tM^{(i)}\right]e^{c_{i}\lambda},\\
                              \int\phi f^{n+1}dv&=\int\phi f^ndv+\displaystyle\sum_{i=1}^\nu b_i\left(-h\int\phi v\cdot\nabla_xf^{(i)}dv\right).\end{cases}
            \end{equation}
\end{description}
\end{subequations}
The first equation in (\ref{scheme_M2StepK}) shows that in each sub-stage $i$,
to compute for $f^{(i)}$, besides the known $f^{(j)}$ and easily obtained
$M^{(j)}$, one also needs $\partial_t M^{(j)}$, $P^{(j)}$,
$v\cdot\nabla_xf^{(j)}$ for all $j<i$, and $M^{(i)}$ that is evaluated at the
current time sub-stage. 

\subsubsection{Computation of $M$ and $\partial_t M$}
Here we show how to compute $M^{(i)}$ and $\partial_t M^{(j)}$.

\begin{description}
    \item[Computation of $M^{(i)}$]{}:\\
solve the second equation of (\ref{scheme_M2StepK}), to get evaluation of
macroscopic quantities at $t^n+c_ih$. Then the Maxwellian $M^{(i)}$ is given
by (\ref{def_Maxwellian}).
\item[Computation of $\partial_t M^{(j)}$]{}:\\
Write $\partial_tM$ as 
\begin{align}
    \partial_tM=\partial_\rho M\partial_t\rho+\nabla_uM\cdot\partial_tu+\partial_TM\partial_tT,
\end{align}
and $\partial_t\rho$, $\partial_tu$ and $\partial_tT$ can be computed from taking moments
of the original equation 
\begin{eqnarray}
\nonumber
    \partial_t\left(\begin{array}{c}
        \rho\\
        \rho u\\
        \frac{d\rho T}{2}+\frac{1}{2}\rho u^2\end{array}\right)&=&\partial_t\int\left(\begin{array}{c}
            1\\
            v\\
            \frac{v^2}{2}\end{array}\right) Mdv=\partial_t\int\left(\begin{array}{c}1\\v\\\frac{v^2}{2}\end{array}\right) fdv\\[-.5cm]            
            \\
            \nonumber
            &=&-\int \left(\begin{array}{c}1\\v\\\frac{v^2}{2}\end{array}\right) v\cdot\nabla_x fdv.\label{scheme_ptM}
\end{eqnarray}
To be specific, with data at sub-stage $(j)$ in $d$-dimensional space, one has 
\begin{align}
    \partial_tM^{(j)}=\partial_\rho M^{(j)}\partial_t\rho^{(j)}+\nabla_uM^{(j)}\cdot\partial_tu^{(j)}+\partial_TM^{(j)}\partial_tT^{(j)},
\end{align}
with
\begin{subequations}
\begin{equation}
    \partial_\rho M^{(j)}=\frac{M^{(j)}}{\rho^{(j)}},\,\partial_uM^{(j)}=M^{(j)}\frac{v-u^{(j)}}{T^{(j)}},\,\partial_TM^{(j)}=M^{(j)}\left[\frac{(v-u^{(j)})^2}{2(T^{(j)})^2}-\frac{d}{2T^{(j)}}\right],
\end{equation}
and
\begin{align}
    \partial_t\rho^{(j)}&=-\int v\cdot\nabla_xf^{(j)}dv,\label{scheme_ptRho}\\
       \partial_tu^{(j)}&=\frac{1}{\rho^{(j)}}\left(u^{(j)}\int v\cdot\nabla_xf^{(j)}dv-\int v\otimes v\cdot\nabla_xf^{(j)}dv\right),\label{scheme_ptU}\\
       \partial_tT^{(j)}&=\frac{1}{d\rho^{(j)}}\left(-\frac{2E^{(j)}}{\rho^{(j)}}\partial_t\rho^{(j)}-2\rho^{(j)} u^{(j)}\partial_tu^{(j)}-\int v^2 v\cdot\nabla_xf^{(j)}dv\right).\label{scheme_ptT}
\end{align}
\end{subequations}
The $\partial_t\rho$ and $\partial_t u$ term in (\ref{scheme_ptT}) is evaluated by
(\ref{scheme_ptRho}) and (\ref{scheme_ptU}). Clearly, all other macroscopic
quantities $\rho^{(j)}$, $u^{(j)}$ and $T^{(j)}$ are associated to $f^{(j)}$.
\end{description}

\section{Properties of ExpRK schemes}
\subsection{Positivity and monotonicity properties}
Usually positivity, although very important for kinetic equations, is extremely hard to be obtained when using high order schemes. Here we show that thanks to the Shu-Osher representation
(\ref{scheme_RKPosForm}) we can follow
\cite{GottliebShu_TVDRK} to prove positivity (and hence SSP property) for the fixed $\tilde{M}$ method ExpRK-F. 

Before proving the theorem we make the following assumption.
\begin{assumption}
For a given $f \geq 0$ there exists $h^*>0$ such that
\begin{equation*}
    f-h\, v\cdot\nabla_xf \geq 0, \hspace{0.5cm}\forall\,\, 0< h \leq h^*.
\end{equation*}
\end{assumption}
The above assumption is the minimal requirement on $f$ in order to obtain a non negative scheme.
Next we can state 
\begin{theorem}
    Let us consider an ExpRK-F method defined by (\ref{scheme_ExpRK}), and 
    $\beta_{ij} \geq 0$ in (\ref{eq:relations}). Then there exist $h_*>0$ and 
    $\mu_*>0$ such that $f^{n+1} \geq 0$ provided that $f^n\geq 0$, 
    $\mu\geq \mu_*$ and $0<h\leq h_*$.
\end{theorem}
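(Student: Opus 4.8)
The plan is to work from the Shu-Osher form of the ExpRK-F scheme and show that each sub-stage $f^{(i)}$, and finally $f^{n+1}$, is a convex-type combination of manifestly nonnegative quantities. First I would rewrite the scheme (\ref{scheme_ExpRKFinal}) in Shu-Osher style: using $\sum_{j<i}\alpha_{ij}=1$ and $\beta_{ij}=\alpha_{ij}(c_i-c_j)$, express
\begin{equation*}
f^{(i)}=\left(1-e^{-c_i\lambda}-\textstyle\sum_{j<i}\beta_{ij}\lambda e^{\lambda(c_j-c_i)}\right)\tilde M+e^{-c_i\lambda}f^n+\sum_{j<i}\lambda e^{\lambda(c_j-c_i)}\left(\beta_{ij}\frac{P^{(j)}}{\mu}-\beta_{ij}\frac{\varepsilon}{\mu}v\cdot\nabla_xf^{(j)}\right),
\end{equation*}
and group the $P^{(j)}=Q^{(j)}+\mu f^{(j)}$ term so that, after rearranging, the right-hand side becomes a linear combination of $\tilde M\geq 0$, $f^n\geq 0$, terms of the form $f^{(j)}-\tfrac{\varepsilon}{\mu}v\cdot\nabla_xf^{(j)}$ (which are $\geq 0$ for $\varepsilon/\mu$ small by the Assumption applied with step $\varepsilon/\mu$), and the positive part $Q^+(f^{(j)})\geq 0$, with the "loss" part $f^{(j)}Q^-(f^{(j)})$ absorbed provided $\mu$ dominates the collision frequency $Q^-$.

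The key steps, in order: (i) an induction on $i$, assuming $f^{(j)}\geq 0$ for all $j<i$; (ii) choosing $\mu_*$ large enough that $\mu\geq \mu_*$ implies $P(f)=Q^++ (\mu-Q^-)f\geq 0$ whenever $f\geq 0$ — this uses a bound on $Q^-$ in terms of the macroscopic density/temperature of $f^{(j)}$, exactly as in the homogeneous case \cite{DP_ExpRK}; (iii) choosing $h_*$ small enough that, with $\lambda=\mu h/\varepsilon$ correspondingly controlled, the scalar coefficient $1-e^{-c_i\lambda}-\sum_{j<i}\beta_{ij}\lambda e^{\lambda(c_j-c_i)}$ multiplying $\tilde M$ is nonnegative, and that $\varepsilon/\mu\leq h^*$ so the transport-damped terms $f^{(j)}-\tfrac{\varepsilon}{\mu}v\cdot\nabla_xf^{(j)}$ are nonnegative; (iv) checking each remaining coefficient is $\geq 0$, which follows from $\beta_{ij}\geq 0$, $\lambda\geq 0$, and $e^{\lambda(c_j-c_i)}>0$; (v) repeating the argument for the final step using $\sum_i b_i=1$, $\beta_{\nu+1\,i}\geq 0$. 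Since $\varepsilon/\mu$ must be $\leq h^*$ and also $h$ free to be chosen small, there is a consistent regime: fix $\mu$ large, then $h$ small.

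I expect the main obstacle to be step (iii)/(iv): showing that the coefficient of $\tilde M$, namely $g_i(\lambda):=1-e^{-c_i\lambda}-\lambda\sum_{j<i}\beta_{ij}e^{\lambda(c_j-c_i)}$, is nonnegative for all $\lambda$ in the relevant range. Note $g_i(0)=0$ and $g_i(\lambda)\to 1$ as $\lambda\to\infty$ under (\ref{eq:assum}), so the issue is purely about intermediate $\lambda$; one shows $g_i'(0)\geq 0$ (using $\sum_{j<i}\beta_{ij}=\sum_{j<i}\alpha_{ij}(c_i-c_j)=c_i-\sum_{j<i}\alpha_{ij}c_j$ together with the consistency relations, which gives $g_i'(0)=c_i-\sum_{j<i}\beta_{ij}$, expected $\geq 0$), and more generally bounds the negative exponential contributions by the positive term $1-e^{-c_i\lambda}\geq c_i\lambda e^{-c_i\lambda}$ — the nontrivial point being that $\sum_{j<i}\beta_{ij}e^{\lambda(c_j-c_i)}\leq \sum_{j<i}\beta_{ij}\leq c_i$ when $\beta_{ij}\geq 0$ and $c_j\leq c_i$, so $g_i(\lambda)\geq 1-e^{-c_i\lambda}-c_i\lambda e^{-c_i\lambda}$, which need not be nonnegative pointwise and so forces the restriction $h\leq h_*$ (equivalently $\lambda\leq\lambda_*$ for fixed $\mu$) — unless one invokes an SSP-type argument bounding things uniformly. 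I would handle this by treating small $\lambda$ (Taylor expansion, dominant balance) and large $\lambda$ (the $\to 1$ limit) separately, leaving a compact $\lambda$-interval on which positivity of the finitely many coefficients is a closed condition determining $h_*$; the precise constants I would relegate to the appendix.
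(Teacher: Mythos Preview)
Your Shu--Osher expression is not the correct one: you have kept the Butcher-form term $e^{-c_i\lambda}f^n$ and simply replaced $a_{ij}$ by $\beta_{ij}$ in the remaining coefficients. The Shu--Osher representation instead replaces $f^n$ by the convex combination $\sum_{j<i}\alpha_{ij}f^{(j)}$, so that after multiplying through by the exponential weights the scheme reads
\[
f^{(i)}=\Bigl(1-\sum_{j<i}e^{(c_j-c_i)\lambda}(\alpha_{ij}+\lambda\beta_{ij})\Bigr)\tilde M
+\sum_{j<i}\lambda\beta_{ij}e^{(c_j-c_i)\lambda}\frac{P^{(j)}}{\mu}
+\sum_{j<i}\alpha_{ij}e^{(c_j-c_i)\lambda}\Bigl(f^{(j)}-\tfrac{h\beta_{ij}}{\alpha_{ij}}\,v\!\cdot\!\nabla_xf^{(j)}\Bigr).
\]
With this decomposition there is no need to split $P^{(j)}=Q^{(j)}+\mu f^{(j)}$ and absorb $f^{(j)}$ into the transport: the $\alpha_{ij}f^{(j)}$ term already supplies the damping, giving the CFL restriction $h\le h_*=\min_{ij}(\alpha_{ij}/\beta_{ij})h^*$, which is \emph{independent of $\varepsilon$}. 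Your grouping produces instead the constraint $\varepsilon/\mu\le h^*$, which ties $\mu_*$ to $\varepsilon$ and is not the statement of the theorem.

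The more serious gap is step~(iii): you acknowledge that your coefficient $g_i(\lambda)$ ``need not be nonnegative pointwise'' and propose to restrict $\lambda\le\lambda_*$. But $\lambda=\mu h/\varepsilon$ must range over all of $(0,\infty)$ as $\varepsilon\to 0$; bounding $\lambda$ is not available, and the whole point of the result is uniformity in $\varepsilon$. The paper resolves this with a one-line monotonicity argument: differentiating $\sum_{j<i}e^{-(c_i-c_j)\lambda}(\alpha_{ij}+\lambda\beta_{ij})$ in $\lambda$ and using $\beta_{ij}=\alpha_{ij}(c_i-c_j)$ gives derivative $-\sum_{j<i}\lambda(c_i-c_j)\beta_{ij}e^{-(c_i-c_j)\lambda}\le 0$, so the sum is maximal at $\lambda=0$ where it equals $\sum_{j<i}\alpha_{ij}=1$. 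Hence the coefficient of $\tilde M$ is $\ge 0$ for \emph{every} $\lambda\ge 0$, and the combination is genuinely convex. This is the missing idea; once you have the correct Shu--Osher form, the proof closes immediately without any case analysis on $\lambda$.
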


\begin{proof}
Using the Shu-Osher representation, one could rewrite the scheme as 
\begin{equation*}
    \begin{cases}
        \text{Step $i$:}\hspace{0.7cm} &(f^{(i)}-\tilde{M})e^{c_i\lambda}=\displaystyle\sum_je^{c_j\lambda}\left\{\alpha_{ij}(f^{(j)}-\tilde{M})+\beta_{ij}\frac{h}{\varepsilon}\left[ P^{(j)}-\mu \tilde{M} -\varepsilon v\cdot\nabla_xf^{(j)}\right]\right\}\\
     \text{Final Step:}\hspace{0.2cm}
     &(f^{n+1}-M)e^{\lambda}=\displaystyle\sum_je^{c_j\lambda}\left\{\alpha_{\nu+1 j}(f^{j}-\tilde{M})+\beta_{\nu+1 j} \frac{h}{\varepsilon}\left[ P^{(j)}-\mu \tilde{M} -\varepsilon v\cdot\nabla_xf^{(j)}\right]\right\}
    \end{cases}
\end{equation*}
Simple algebra gives, for $\forall$, $i=1,\cdots,\nu$, $j<i$ 
\begin{align}\label{scheme_ExpRKPosForm}
    f^{(i)}=&\tilde{M}\left(1-\sum_je^{(c_j-c_i)\lambda}\left(\alpha_{ij}+\lambda\beta_{ij}\right)\right)\nonumber\\
            &+\sum_{j=1}^{i-1}\lambda\beta_{ij}e^{(c_j-c_i)\lambda}\frac{P^{(j)}}{\varepsilon}\nonumber\\
            &+\sum_{j=1}^{i-1}\alpha_{ij}e^{(c_j-c_i)\lambda}\left(f^{(j)}-\frac{h\beta_{ij}}{\alpha_{ij}}v\cdot\nabla_xf^{(j)}\right).
\end{align}
The same derivation can be also carried out for the final step. If 
this is a convex combination, then, to have positivity, one only 
check that each of them is positive
\begin{subequations}
\begin{align}
    &\tilde{M}> 0;\\
    &P^{(j)}>0;\\
    &f^{(j)}-\frac{h\beta_{ij}}{\alpha_{ij}}v\cdot\nabla_xf^{(j)}>0.\label{cond_transport}
\end{align}
\end{subequations}
Positivity of $\tilde{M}$ is obvious, and $P^{(j)}$ is positive if one has big 
enough $\mu$
\begin{equation*}
    \mu\geq\mu_*=\sup{|Q^-|}\Rightarrow P=Q+\mu f=Q^+-fQ^-+\mu f>0.
\end{equation*}
To handle (\ref{cond_transport}), one just need to adopt {Assumption 
1}. It is positive if
\begin{equation*}
    0<h\leq h_*=\min_{ij}{\left(\frac{\alpha_{ij}}{\beta_{ij}}h^*\right)},
\end{equation*}
which guarantees (\ref{cond_transport}).

To check the convexity of (\ref{scheme_ExpRKPosForm}), it should be proved that
\begin{equation}\label{scheme_PosConvex}
\sum_{j}e^{(c_j-c_i)\lambda}\left(\alpha_{ij}+\lambda\beta_{ij}\right)\leq 1.
\end{equation}
This can be seen by just taking the derivative with respect to $\lambda$. Use 
$\Delta_{ij}=c_i-c_j$
\begin{align}
    &\frac{d}{d\lambda}\left(\sum_je^{-\Delta_{ij}\lambda}(\alpha_{ij}+\lambda\beta_{ij})\right)\nonumber\\
   =&\sum_{j}e^{-\Delta_{ij}\lambda}\left(-\Delta_{ij}(\alpha_{ij}+\lambda\beta_{ij})+\beta_{ij}\right)\\
   =&\sum_{j}e^{-\Delta_{ij}\lambda}\left(-\beta_{ij}\Delta_{ij}\lambda+\beta_{ij}-\alpha_{ij}\Delta_{ij}\right)<0
\end{align}
In the last step, $\beta_{ij}=\alpha_{ij}(c_i-c_j)$ is used. Thus the 
left-hand side of expression (\ref{scheme_PosConvex}) is monotonically 
decreasing with respect to $\lambda\geq 0$ and has a maximum
$$
\sum_{j}\alpha_{ij}=1
$$
at $\lambda=0$. Similarly we can proceed for the final step. This confirms 
(\ref{scheme_PosConvex}) and finishes our proof.\end{proof}
Since the proof above is based on a convexity argument, we also have 
monotonicity of the numerical solution or SSP property. Thus the building 
block of our exponential schemes is naturally given by the optimal SSP schemes 
which minimize the stability restriction on the time stepping. We refer to 
\cite{GST} for a review on SSP Runge-Kutta schemes.
 
\begin{remark}~
\begin{itemize}
    \item Note that the proof above does not rely on the value $\lambda$ take, 
        i.e. the scheme is positive uniformly in $\varepsilon$. For the choice of $\mu_*$ we refer the reader to the discussion in \cite{DP_ExpRK, toscani}.        
        
\item Optimal second and third order SSP explicit Runge-Kutta methods such 
    that $\beta_{ij}\geq 0$ have been developed in the literature. However the 
    classical third order SSP method by Shu and Osher \cite{ShuOsher_ENO} does 
    not satisfy $c_j\leq c_i$ for $j < i$. Note that standard second order 
    midpoint and third order Heun methods satisfy the assumptions of
    Theorem 1 (see Table 1.1 page 135 in \cite{HNW}).

\item In \cite{GottliebShu_TVDRK} it was proved that allfour stage, fourth order 
RK methods with positive CFL coefficient $h_*$ must have at least one negative 
$\beta_{ij}$. The most popular fourth order method using five stage with 
nonnegative $\beta_{ij}$ has been developed in \cite{RuSp}. In \cite{RuSp} the 
authors also proved that any method of order greater then four will have 
negative $\beta_{ij}$.

\item    Positivity of ExpRK-V schemes is much more difficult to achieve because of the
    involvement of the $\partial_tM$ term. However, we can prove:
    \begin{itemize}
        \item[(i)] $\rho$ is positive;
        \item[(ii)] the negative part of $T$ is $O(h\varepsilon)$.
     \end{itemize}
        We leave both the proofs of the above results to
        the appendix.
\end{itemize}
\end{remark}

\subsection{Contraction and Asymptotic Preservation}
In this section, it will be presented that the new exponential Runge-Kutta schemes preserve the asymptotic
limit of the Boltzmann equation. The proof is done by following the proof of
contraction in \cite{DP_ExpRK}.

If one check the formula (\ref{scheme_ExpRK}) and (\ref{scheme_M2Final}), it
seems clear that under assumptions (\ref{eq:assum}) the big $\lambda$ on the shoulder of exponential will push
the distance between $f$ and the Maxwellian function going to zero. But sometimes
the Runge-Kutta method may have tough coefficients, say $c_\nu=1$. When this
happens, the argument cannot be carried through. However, one could still
prove AP property using the particular structure of the collision operator 
following the framework below.

We need to make use of the following assumption.
\begin{assumption}\label{assumption_Pnorm}
    There is a constant $C$ big enough, such that $\left|P(f,f)-P(g,g)\right|<C\left|f-g\right|$ where $\left|\cdot\right|$ denotes a proper metric. 
\end{assumption}
Part of the proof for the metric $\mathrm{d}_2$ defined in
    $P_s(\mathbb{R}^d)$ space (see \cite{TV_ProbMetrics}) can be found in the appendix.

Under this assumption, considering $P(M,M)=Q(M,M)+\mu M=\mu M$, one has
\begin{equation}
    \left|P(f,f)-\mu M\right|<C\left|f-M\right|.
\end{equation}
The derivation and the proof for both approaches being AP will be presented below.
We first show that ExpRK-F is AP for any given explicit Runge-Kutta scheme.

For AP property, one needs to show that as $\varepsilon\to 0$, the scheme gives correct
Euler limit. To do this, basically one needs to prove that $f$ goes to the
Maxwellian function whose macroscopic quantities solve the Euler equation
(\ref{EulerLimit}). 

Let us define
\begin{align}
    d_i&=\left|f^{(i)}-\tilde{M}\right|,\hspace{0.5cm}
    D_i=\left|v\cdot\nabla_xf^{(i)}\right|,\hspace{0.5cm}
    d_0=\left|f^{n}-\tilde{M}\right|,
\hspace{0.5cm}\vec{e}=[1,1,\cdots,1]^T,\nonumber\\
\hspace{0.5cm}\vec{d}&=\left[d_1,d_2,\cdots,d_\nu\right],\hspace{0.5cm}\vec{D}=\left[D_1,D_2,\cdots,D_\nu\right]^T.\label{def_forAPProof}
\end{align}
Moreover $\mathbb{A}$ is a lower-triangular matrix and $\mathbb{E}$ is a diagonal matrix given by
$$\mathbb{A}_{ij}=\frac{\lambda}{\mu}a_{ij}e^{(c_j-c_i)\lambda},\hspace{0.5cm}\mathbb{E}=\text{diag}\{e^{-c_1\lambda},e^{-c_2\lambda},\cdots,e^{-c_\nu\lambda}\}.$$
\begin{lemma}
    Based on the definitions above, for ExpRK-F one has
    \begin{equation*}
\vec{d}\leq d_0\left(\mathbb{I}-C\mathbb{A}\right)^{-1}\cdot\mathbb{E}\cdot\vec{e}+\varepsilon\left(\mathbb{I}-C\mathbb{A}\right)^{-1}\cdot\mathbb{A}\cdot\vec{{D}}
    \end{equation*}
\end{lemma}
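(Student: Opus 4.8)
The plan is to take absolute values in the sub-stage formula for $f^{(i)}$ derived right after \eqref{scheme_ExpRK} (the displayed "Step $i$" equation in the ExpRK-F scheme), bound each term by the triangle inequality, and then collect the resulting inequalities into the single vector inequality by inverting $(\mathbb{I}-C\mathbb{A})$. Concretely, starting from
\[
f^{(i)}=\Bigl(1-e^{-c_i\lambda}-\sum_{j<i}a_{ij}\lambda e^{\lambda(c_j-c_i)}\Bigr)\tilde M + e^{-c_i\lambda}f^n + \sum_{j<i}a_{ij}\lambda e^{\lambda(c_j-c_i)}\Bigl(\frac{P^{(j)}}{\mu}-\frac{\varepsilon}{\mu}v\cdot\nabla_x f^{(j)}\Bigr),
\]
I would subtract $\tilde M$ from both sides. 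The coefficient of $\tilde M$ becomes $-e^{-c_i\lambda}-\sum_{j<i}a_{ij}\lambda e^{\lambda(c_j-c_i)}$, so the $P^{(j)}$ terms should be rewritten as $\frac{1}{\mu}(P^{(j)}-\mu\tilde M) + \tilde M$ so that the stray $\tilde M$ pieces cancel against that coefficient. This yields
\[
f^{(i)}-\tilde M = e^{-c_i\lambda}(f^n-\tilde M) + \sum_{j<i}\frac{\lambda}{\mu}a_{ij}e^{\lambda(c_j-c_i)}\bigl(P^{(j)}-\mu\tilde M\bigr) - \sum_{j<i}\frac{\varepsilon\lambda}{\mu}a_{ij}e^{\lambda(c_j-c_i)}\,v\cdot\nabla_x f^{(j)}.
\]

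Next I take $|\cdot|$ and apply the triangle inequality together with Assumption~2, which gives $|P^{(j)}-\mu\tilde M|=|P(f^{(j)},f^{(j)})-\mu M|\le C|f^{(j)}-\tilde M|=C d_j$ (using $\tilde M=M$ in the contraction bound, as stated just before the lemma). The first term contributes $e^{-c_i\lambda}d_0$, the second contributes $\sum_{j<i}\frac{\lambda}{\mu}a_{ij}e^{\lambda(c_j-c_i)}C d_j = C\sum_j \mathbb{A}_{ij} d_j$, and the third contributes $\varepsilon\sum_j\mathbb{A}_{ij}D_j$ (noting $a_{ij}\ge 0$ is needed for these to be genuine upper bounds, which I would state or assume). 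Writing this for all $i$ in vector form gives the component-wise inequality
\[
\vec d \le d_0\,\mathbb{E}\cdot\vec e + C\,\mathbb{A}\cdot\vec d + \varepsilon\,\mathbb{A}\cdot\vec D,
\]
hence $(\mathbb{I}-C\mathbb{A})\vec d \le d_0\,\mathbb{E}\cdot\vec e + \varepsilon\,\mathbb{A}\cdot\vec D$.

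The last step is to left-multiply by $(\mathbb{I}-C\mathbb{A})^{-1}$ and conclude. Here lies the one genuinely delicate point: this inversion preserves the inequality only if $(\mathbb{I}-C\mathbb{A})^{-1}$ has nonnegative entries. Fortunately $\mathbb{A}$ is strictly lower triangular (since $a_{ij}=0$ for $j\ge i$ and the sum runs over $j<i$), so $C\mathbb{A}$ is nilpotent and $(\mathbb{I}-C\mathbb{A})^{-1}=\sum_{k=0}^{\nu-1}(C\mathbb{A})^k$ is a finite sum; with $C>0$ and $a_{ij}\ge 0$ every term is entrywise nonnegative, so the inverse is entrywise nonnegative and the inequality is preserved. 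This gives exactly
\[
\vec d \le d_0\,(\mathbb{I}-C\mathbb{A})^{-1}\cdot\mathbb{E}\cdot\vec e + \varepsilon\,(\mathbb{I}-C\mathbb{A})^{-1}\cdot\mathbb{A}\cdot\vec D,
\]
as claimed. I expect the main obstacle to be purely bookkeeping — correctly reorganizing the $\tilde M$ coefficient so that the constant term drops out and one is left with the clean recursion $\vec d \le d_0\mathbb{E}\vec e + C\mathbb{A}\vec d + \varepsilon\mathbb{A}\vec D$ — rather than anything analytically subtle; the positivity of the matrix inverse is immediate from triangularity.
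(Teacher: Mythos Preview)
Your proof is correct and follows essentially the same route as the paper: rewrite the stage formula as an identity for $f^{(i)}-\tilde M$, apply the triangle inequality together with Assumption~2 to obtain the componentwise recursion $\vec d \le d_0\,\mathbb{E}\vec e + C\mathbb{A}\vec d + \varepsilon\mathbb{A}\vec D$, and then invert $\mathbb{I}-C\mathbb{A}$. Your explicit justification that $(\mathbb{I}-C\mathbb{A})^{-1}=\sum_{k=0}^{\nu-1}(C\mathbb{A})^k$ is entrywise nonnegative (and hence preserves the inequality), together with the observation that $a_{ij}\ge 0$ is needed when pulling absolute values through the sum, are points the paper leaves implicit.
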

\begin{proof}
    It is just direct derivation from (\ref{scheme_ExpRK})
\begin{align}
    (f^{(i)}-\tilde{M})e^{c_i\lambda}&=(f^{n}-\tilde{M})+\sum_{j=1}^{i-1}a_{ij}\frac{\lambda}{\mu}e^{c_{j}\lambda}(P^{(j)}-\mu \tilde{M}-\varepsilon v\cdot\nabla_xf^{(j)})\label{scheme_1}
\end{align}
By taking the norm, adopting the triangle inequality, and make use of the 
assumption that $\left|P(f)-\mu\tilde{M}\right|\leq C\left|f-\tilde{M}\right|$, one gets
\begin{align}
    \left|f^{(i)}-\tilde{M}\right|&\leq\left|f^n-\tilde{M}\right|e^{-c_i\lambda}+\sum_ja_{ij}\frac{\lambda}{\mu}e^{(c_j-c_i)\lambda}\left(C\left|f^{(j)}-\tilde{M}\right|+\varepsilon \left|v\cdot\nabla_xf^{(j)}\right|\right)\label{scheme_2}
\end{align}
Written in the matrix form, it becomes
\begin{align}
    \left(\begin{array}{c}d_1\\d_2\\ \vdots\\d_\nu\end{array}\right)&\le\mathbb{E}\left(\begin{array}{c}d_0\\d_0\\ \vdots\\d_0\end{array}\right)+C\mathbb{A}\left(\begin{array}{c}d_1\\d_2\\ \vdots\\d_\nu\end{array}\right)+\varepsilon\mathbb{A}\left(\begin{array}{c}{D}_1\\{D}_2\\\vdots\\{D}_\nu\end{array}\right)\nonumber
\end{align}
Thus
\begin{align}
    \vec{d}&\leq d_0\mathbb{E}\cdot\vec{e}+C\mathbb{A}\cdot\vec{d}+\varepsilon\mathbb{A}\cdot\vec{{D}}\label{scheme_3}\\
\vec{d}&\leq d_0\left(\mathbb{I}-C\mathbb{A}\right)^{-1}\cdot\mathbb{E}\cdot\vec{e}+\varepsilon\left(\mathbb{I}-C\mathbb{A}\right)^{-1}\cdot\mathbb{A}\cdot\vec{{D}}\label{scheme_4}
\end{align}
which completes the proof.
\end{proof}
\begin{lemma}
Define
\begin{align}\label{estimate_distancefn_condense_R}
    R_1(\lambda)&=e^{-\lambda}\left(1+\frac{C\lambda}{\mu}\vec{b}\cdot\mathbb{E}^{-1}\left(\mathbb{I}-C\mathbb{A}\right)^{-1}\mathbb{E}\cdot\vec{e}\right)\\
    \vec{R}_2(\lambda)&=\frac{\varepsilon\lambda}{\mu}e^{-\lambda}\vec{b}\cdot\mathbb{E}^{-1}\cdot(\mathbb{I}-C\mathbb{A})^{-1}\cdot\left(\mathbb{I}+C\mathbb{A}\right)
\end{align}
then for scheme (\ref{scheme_ExpRK}) we have
\begin{equation}\label{estimate_distancefn_condense}
    \left|f^{n+1}-\tilde{M}\right|\leq\left|f^{n}-\tilde{M}\right|R_1(\lambda)+\vec{R}_2\cdot\vec{D}
\end{equation}
\end{lemma}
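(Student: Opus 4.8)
The plan is to repeat, at the level of the final Runge--Kutta stage, exactly the manipulation that produced the internal-stage estimate of Lemma~1, and then substitute the Lemma~1 bound on $\vec d$ into the result.

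First I would take the final step of (\ref{scheme_ExpRK}), use $h/\varepsilon=\lambda/\mu$, and keep it in the form
$$(f^{n+1}-\tilde M)\,e^{\lambda}=(f^{n}-\tilde M)+\sum_{i=1}^{\nu}b_i\,\frac{\lambda}{\mu}\,e^{c_i\lambda}\bigl(P^{(i)}-\mu\tilde M-\varepsilon\,v\cdot\nabla_x f^{(i)}\bigr).$$
Applying the metric to both sides, the triangle inequality, Assumption~\ref{assumption_Pnorm} (together with $P(\tilde M,\tilde M)=\mu\tilde M$, valid since $\tilde M$ is a Maxwellian) in the form $|P^{(i)}-\mu\tilde M|\le C\,d_i$, and the definitions $D_i=|v\cdot\nabla_x f^{(i)}|$, $d_0=|f^n-\tilde M|$, one obtains
$$|f^{n+1}-\tilde M|\le e^{-\lambda}d_0+e^{-\lambda}\,\frac{\lambda}{\mu}\,\vec b\cdot\mathbb{E}^{-1}\bigl(C\vec d+\varepsilon\vec D\bigr),$$
where $\mathbb{E}^{-1}=\mathrm{diag}\{e^{c_i\lambda}\}$ and the $b_i$ are taken nonnegative (replace $b_i$ by $|b_i|$ otherwise; for the SSP and standard schemes considered here $b_i\ge 0$).

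Next I would insert the Lemma~1 estimate $\vec d\le d_0(\mathbb{I}-C\mathbb{A})^{-1}\mathbb{E}\,\vec e+\varepsilon(\mathbb{I}-C\mathbb{A})^{-1}\mathbb{A}\,\vec D$. This substitution is legitimate componentwise because $\mathbb{A}$ is strictly lower triangular, hence nilpotent, so that $(\mathbb{I}-C\mathbb{A})^{-1}=\sum_{k=0}^{\nu-1}(C\mathbb{A})^k$ has nonnegative entries, and left multiplication by the nonnegative objects $\vec b$ and $\mathbb{E}^{-1}$ preserves the inequality. Collecting the terms multiplying $d_0$ gives precisely $R_1(\lambda)$; the terms multiplying $\vec D$ give $\frac{\varepsilon\lambda}{\mu}e^{-\lambda}\,\vec b\cdot\mathbb{E}^{-1}\bigl[C(\mathbb{I}-C\mathbb{A})^{-1}\mathbb{A}+\mathbb{I}\bigr]\vec D$. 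Since $\mathbb{A}$ commutes with $(\mathbb{I}-C\mathbb{A})^{-1}$ (a finite power series in $\mathbb{A}$) one has $C(\mathbb{I}-C\mathbb{A})^{-1}\mathbb{A}=(\mathbb{I}-C\mathbb{A})^{-1}C\mathbb{A}$, and bounding $\mathbb{I}\le(\mathbb{I}-C\mathbb{A})^{-1}$ componentwise the bracket is dominated by $(\mathbb{I}-C\mathbb{A})^{-1}(\mathbb{I}+C\mathbb{A})$, which is exactly $\vec R_2(\lambda)$, completing the proof.

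The triangle-inequality bookkeeping and the algebraic regrouping are routine; the point requiring care — and the main obstacle — is keeping the vector inequalities honest, i.e. checking that every matrix applied to an already-established componentwise inequality is entrywise nonnegative (this is where the strict lower triangularity of $\mathbb{A}$ and the Neumann-series form of $(\mathbb{I}-C\mathbb{A})^{-1}$ enter), and ensuring that $C$ in Assumption~\ref{assumption_Pnorm} is chosen uniformly in $\varepsilon$ and $h$ so that $R_1$ and $\vec R_2$ are genuine $\varepsilon$-uniform bounds. Note that, as in Lemma~1, no use is made of the ordering assumption (\ref{eq:assum}): the estimate holds for arbitrary explicit Runge--Kutta coefficients, which is precisely what makes this route to the AP property work even when $c_\nu=1$.
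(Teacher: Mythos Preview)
Your proof is correct and follows essentially the same route as the paper: take the final stage of (\ref{scheme_ExpRK}), apply the triangle inequality and Assumption~\ref{assumption_Pnorm} to obtain $|f^{n+1}-\tilde M|\le e^{-\lambda}d_0+\frac{\lambda}{\mu}e^{-\lambda}\vec b\cdot\mathbb{E}^{-1}(C\vec d+\varepsilon\vec D)$, substitute the Lemma~1 bound on $\vec d$, and regroup. You are in fact more careful than the paper in justifying the componentwise substitution (nonnegativity of $(\mathbb{I}-C\mathbb{A})^{-1}$ via the Neumann series, nonnegativity of $b_i$) and in explaining the passage from $C(\mathbb{I}-C\mathbb{A})^{-1}\mathbb{A}+\mathbb{I}$ to $(\mathbb{I}-C\mathbb{A})^{-1}(\mathbb{I}+C\mathbb{A})$, which the paper simply writes as an inequality without comment.
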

\begin{proof}
    It is just a simple derivation. Define 
\begin{align}\label{scheme_distancek}
    k_i=\frac{h}{\varepsilon}(P^{(i)}-\mu \tilde{M}-\varepsilon v\cdot\nabla_xf^{(i)})e^{c_i\lambda}.
\end{align}
Evidently, the previous lemma leads to 
\begin{align}
\vec{|k|}\le\frac{\lambda}{\mu}\mathbb{E}^{-1}\cdot\left(C\vec{d}+\varepsilon\vec{D}\right).
\end{align}
Back to (\ref{scheme_ExpRK}), one has 
\begin{equation}
        \left(f^{n+1}-\tilde{M}\right)=\left(f^{n}-\tilde{M}\right)e^{-\lambda}+\sum_{s=1}^{\nu}b_ik_{i}e^{-\lambda},
        \end{equation}        
which implies        
\begin{subequations}
    \begin{align}\label{scheme_distancefn}
        \left|f^{n+1}-\tilde{M}\right|\leq& d_0e^{-\lambda}+\frac{\lambda}{\mu}e^{-\lambda}\vec{b}^T\cdot\mathbb{E}^{-1}\cdot\left(C\vec{d}+\varepsilon\vec{D}\right)\\
                                                             \leq& e^{-\lambda}\left(d_0+\frac{\lambda}{\mu}\vec{b}\cdot\mathbb{E}^{-1}\cdot\left(C\left(\mathbb{I}-C\mathbb{A}\right)^{-1}\cdot\left(d_0\mathbb{E}\cdot\vec{e}+\varepsilon\mathbb{A}\cdot\vec{D}\right)+\varepsilon\vec{D}\right)\right)\\
                                                             \leq& d_0e^{-\lambda}\left(1+\frac{C\lambda}{\mu}\vec{b}\cdot\mathbb{E}^{-1}\cdot(\mathbb{I}-C\mathbb{A})^{-1}\cdot\mathbb{E}\cdot\vec{e}\right)\\
                                                                 &+\frac{\varepsilon\lambda}{\mu}e^{-\lambda}\vec{b}\cdot\mathbb{E}^{-1}\cdot\left(\mathbb{I}-C\mathbb{A}\right)^{-1}\cdot\left(\mathbb{I}+C\mathbb{A}\right)\cdot\vec{D}.
\end{align}
\end{subequations}
Here $\vec{b}=\left[b_1,b_2,\cdots,b_\nu\right]$ is a row vector. The result
(\ref{scheme_4}) is also used.
Plug in the definition of $R_1$ and $R_2$, one gets 
\begin{equation}\label{estimate_distancefn_condense}
    \left|f^{n+1}-\tilde{M}\right|\leq\left|f^{n}-\tilde{M}\right|R_1(\lambda)+\vec{R}_2(\lambda)\cdot\vec{D}.
\end{equation}
\end{proof}
The two lemmas above gives us the estimation of the convergence rate towards
the Maxwellian. The smaller $R_1$ is, the faster the function converges. $R_2$
represents the drift from the transportation, and is expected to be small in
the limit. Also, the matrix $\mathbb{A}$ is usually a
lower triangular matrix, and a strict lower triangular matrix for explicit
Runge-Kutta, thus it is a nilpotent.
\begin{theorem}
    The method ExpRK-F defined by (\ref{scheme_ExpRK}) is AP for general 
    explicit Runge-Kutta method with $0\leq c_1\leq c_2\leq\cdots \leq c_\nu<1$.
\end{theorem}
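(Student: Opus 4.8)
The plan is to combine the two lemmas just established with the assumption $0\le c_1\le\cdots\le c_\nu<1$, which forces the diagonal entries $e^{-c_i\lambda}$ of $\mathbb{E}$ to stay bounded while the global factor $e^{-\lambda}$ in $R_1$ and $\vec R_2$ decays. First I would examine the structure of $(\mathbb{I}-C\mathbb{A})^{-1}$. Since $\mathbb{A}$ is strictly lower triangular it is nilpotent, so $(\mathbb{I}-C\mathbb{A})^{-1}=\sum_{k=0}^{\nu-1}(C\mathbb{A})^k$ is a finite sum; each entry of $\mathbb{A}$ is $\frac{\lambda}{\mu}a_{ij}e^{(c_j-c_i)\lambda}$ with $c_j\le c_i$, hence bounded uniformly in $\lambda$ (the polynomial $\lambda$ is dominated by the decaying exponential $e^{(c_j-c_i)\lambda}$ whenever $c_j<c_i$, and the entry vanishes when $c_j=c_i$ by strict lower-triangularity). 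The key quantitative point to extract is therefore: the matrix entries of $\mathbb{E}^{-1}(\mathbb{I}-C\mathbb{A})^{-1}\mathbb{E}$ and of $\mathbb{E}^{-1}(\mathbb{I}-C\mathbb{A})^{-1}(\mathbb{I}+C\mathbb{A})$ are bounded by a constant independent of $\lambda$ (this is where $c_i<1$ versus $c_\nu=1$ matters — with $c_\nu=1$ the factor $\mathbb{E}^{-1}$ carries an $e^{\lambda}$ that the analysis of Lemma 1–2 alone cannot absorb, which is exactly why the hypothesis is needed and why the earlier remark defers the $c_\nu=1$ case to Section 4.2).

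Granting that boundedness, Lemma 2 gives $R_1(\lambda)\le e^{-\lambda}(1+\frac{C\lambda}{\mu}K_1)$ and $|\vec R_2(\lambda)|\le \frac{\varepsilon\lambda}{\mu}e^{-\lambda}K_2$ for constants $K_1,K_2$ independent of $\lambda$. Since $\lambda=\mu h/\varepsilon\to\infty$ as $\varepsilon\to 0$ (with $h$ fixed), both $e^{-\lambda}$ and $\lambda e^{-\lambda}$ tend to $0$, so $R_1(\lambda)\to 0$ and $\vec R_2(\lambda)\to 0$. Feeding these into the contraction estimate (\ref{estimate_distancefn_condense}),
\begin{equation*}
\left|f^{n+1}-\tilde M\right|\le \left|f^{n}-\tilde M\right|R_1(\lambda)+\vec R_2(\lambda)\cdot\vec D\;\xrightarrow[\varepsilon\to0]{}\;0,
\end{equation*}
provided $|f^n-\tilde M|$ and the $D_i=|v\cdot\nabla_x f^{(i)}|$ remain bounded as $\varepsilon\to 0$ — the latter being the usual regularity hypothesis implicit in such AP arguments, and the former following inductively if the initial datum is well-prepared (or after one step otherwise). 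Hence $f^{n+1}\to \tilde M$, i.e. the kinetic solution collapses onto the prescribed equilibrium function.

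It then remains to identify $\tilde M$ with the correct Maxwellian, i.e. the one whose moments solve the Euler system (\ref{EulerLimit}). Here I would invoke the construction in Section 3.2.2: $\tilde M$ is built as the Maxwellian with density, momentum and energy obtained by advancing the Euler equations with the \emph{same} Runge-Kutta coefficients $a_{ij},b_i,c_i$. Since in the limit $f^{(i)}\to\tilde M$ and $P^{(i)}=P(f^{(i)},f^{(i)})\to P(\tilde M,\tilde M)=\mu\tilde M$, taking moments of the kinetic scheme (\ref{scheme_ExpRK}) against $\phi=(1,v,\tfrac12|v|^2)^T$ and using $\langle Q\rangle=\langle vQ\rangle=\langle\tfrac12|v|^2 Q\rangle=0$, the $\varepsilon^{-1}$ terms cancel and the moment update reduces exactly to the Runge-Kutta discretization of the Euler fluxes — which is by construction the macroscopic scheme used to define $\tilde M^{n+1}$. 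Thus the limiting scheme is precisely the RK discretization of (\ref{EulerLimit}), establishing asymptotic preservation. The main obstacle is the uniform-in-$\lambda$ bound on the resolvent-type matrices in the first paragraph: one must check carefully that every polynomial-in-$\lambda$ prefactor generated by powers of $\mathbb{A}$ is paired with a strictly decaying exponential $e^{(c_j-c_i)\lambda}$ with $c_j<c_i$, which is exactly what the ordering hypothesis and the strict lower-triangular (explicit) structure guarantee; the rest is bookkeeping with the contraction inequality.
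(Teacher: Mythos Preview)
Your overall strategy matches the paper's: use Lemmas~1 and~2 to reduce the question to showing $R_1(\lambda)\to 0$ and $\vec R_2(\lambda)\to 0$ as $\lambda\to\infty$, then invoke the construction of $\tilde M$ from the Euler solver to conclude AP. The gap is in your central quantitative claim, namely that the entries of $\mathbb{E}^{-1}(\mathbb{I}-C\mathbb{A})^{-1}\mathbb{E}$ (and of $\mathbb{E}^{-1}(\mathbb{I}-C\mathbb{A})^{-1}(\mathbb{I}+C\mathbb{A})$) are bounded by a constant independent of $\lambda$. This is false: the conjugation by $\mathbb{E}^{-1}(\cdot)\mathbb{E}$ \emph{exactly cancels} the decaying exponentials $e^{(c_j-c_i)\lambda}$ that you rely on in the entries of $\mathbb{A}$. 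Indeed $(\mathbb{E}^{-1}\mathbb{A}\mathbb{E})_{ij}=\frac{\lambda}{\mu}a_{ij}$ with no exponential left, so $\mathbb{E}^{-1}(\mathbb{I}-C\mathbb{A})^{-1}\mathbb{E}=\sum_{k=0}^{\nu-1}\mathbb{B}^k$ with $\mathbb{B}_{ij}=\frac{C\lambda}{\mu}a_{ij}$, and the $k$th subdiagonal of this sum has entries of order $\lambda^k$. Your parenthetical ``the entry vanishes when $c_j=c_i$ by strict lower-triangularity'' conflates the index condition $j<i$ with the node condition $c_j=c_i$; the hypothesis allows repeated nodes, and in any case the conjugated matrix $\mathbb{B}$ shows the growth persists even when all $c_j<c_i$ strictly. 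Consequently your stated bound $R_1(\lambda)\le e^{-\lambda}\bigl(1+\tfrac{C\lambda}{\mu}K_1\bigr)$ with $K_1$ a $\lambda$-independent constant is not available.

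The repair is exactly what the paper does: accept the polynomial growth and observe that $R_1(\lambda)=O\!\bigl(e^{-\lambda}\lambda^{\nu}\bigr)$, which still tends to $0$; for $\vec R_2$ the factor $e^{-\lambda}\vec b\cdot\mathbb{E}^{-1}$ contributes $b_i e^{(c_i-1)\lambda}$ with $c_i<1$, again dominating any polynomial in $\lambda$ coming from the resolvent. So your conclusion survives, but the route through a uniform bound $K_1,K_2$ must be replaced by the polynomial-in-$\lambda$ estimate against the global exponential decay.
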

\begin{proof}
Obviously if $R_1(\lambda)=O(\varepsilon)$ and $R_2(\lambda)=O(\varepsilon)$ for
$\varepsilon$ small enough, the theorem holds. In fact, for explicit
Runge-Kutta method, $\mathbb{A}$ is a strict lower triangular matrix, and thus
a nilpotent, then one has the following 
\begin{subequations}
\begin{align}
    \mathbb{E}^{-1}\left(\mathbb{I}-C\mathbb{A}\right)^{-1}\mathbb{E}&=\mathbb{E}^{-1}\left(\mathbb{I}+C\mathbb{A}+C^2\mathbb{A}^2+\cdots+C^{\nu-1}\mathbb{A}^{\nu-1}\right)\mathbb{E}\\
                                                                    &=\mathbb{I}+\mathbb{B}+\mathbb{B}^2+\cdots+\mathbb{B}^{\nu-1}
\end{align}
\end{subequations}
where $\mathbb{A}^\nu=0$, definition $\mathbb{B}=C\mathbb{E}^{-1}\mathbb{A}\mathbb{E}$ and
$\mathbb{E}^{-1}\mathbb{A}^2\mathbb{E}=\mathbb{E}^{-1}\mathbb{A}\mathbb{E}\mathbb{E}^{-1}\mathbb{A}\mathbb{E}$
are used. According to the definition of $\mathbb{A}$ and $\mathbb{E}$, it can
be computed that 
\begin{align*}
    \mathbb{B}_{ij}=C\mathbb{A}_{ij}e^{c_i\lambda-c_j\lambda}=\frac{C\lambda}{\mu}a_{ij}.
\end{align*}
Thus $\mathbb{I}+\sum_k\mathbb{B}^k$ is a matrix such that: the element on the
$k$th diagonal is of order $O(\lambda^k)$. This leads to obvious result 
\begin{equation*}
    R_1(\lambda)=e^{-\lambda}\left(1+\frac{C\lambda}{\mu}\vec{b}\cdot\mathbb{E}^{-1}\left(\mathbb{I}-\mathbb{A}\right)^{-1}\mathbb{E}\cdot\vec{e}\right)=O(e^{-\lambda}\lambda^{\nu-1})<O(\varepsilon)
\end{equation*}
Similar analysis can be carried to $R_2(\lambda)$ to show that it vanishes to
zero as $\varepsilon\to 0$.\\
So as $\varepsilon\to 0$, $|f^{n+1}-\tilde{M|}\to 0$. By definition,
$\tilde{M}$ is defined by macroscopic quantities computed directly from the
limit Euler equation, thus the numerical scheme is AP, which finishes
the proof.
\end{proof}
The derivation of the scheme ExpRK-V is essentially the same, and in the end, 
one still has, in a condense form 
\begin{equation}\label{estimate_distancefn_condense}
\left|f^{n+1}-{M}^{n+1}\right|\leq\left|f^{n}-{M}^n\right|R_1(\lambda)+\vec{R}_2\cdot\vec{D}
\end{equation}
with $R_1$, $\vec{R}_2$, $\mathbb{E}$, $\mathbb{A}$ defined in the same way as
in (\ref{def_forAPProof}), but $D_i=|v\cdot\nabla_xf^{(i)}+\partial_tM^{(i)}|$.
Following the same computations, one could prove that this method is AP too, but
the proof is omitted for brevity.
\begin{theorem}
    The method ExpRK-V defined by (\ref{scheme_M2}) is AP for general explicit Runge-Kutta method.
\end{theorem}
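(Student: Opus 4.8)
The plan is to mirror the argument already used for ExpRK-F in Theorem 2, exploiting the structural identity of the two schemes once the transport term is augmented by the time derivative of the Maxwellian. First I would note that the scheme (\ref{scheme_M2}) has exactly the same algebraic form as (\ref{scheme_ExpRK}), with $f-\tilde M$ replaced by $f-M$ (the genuine local Maxwellian) and with the forcing term $-\varepsilon v\cdot\nabla_x f^{(j)}$ replaced by $-\varepsilon v\cdot\nabla_x f^{(j)}-\varepsilon\partial_t M^{(j)}$. Consequently, setting $d_i=|f^{(i)}-M^{(i)}|$, $d_0=|f^n-M^n|$ and redefining $D_i=|v\cdot\nabla_x f^{(i)}+\partial_t M^{(i)}|$, the derivation of Lemma 1 goes through verbatim: taking norms in the first equation of (\ref{scheme_M2StepK}), using the triangle inequality and Assumption 2 in the form $|P(f,f)-\mu M|<C|f-M|$, one obtains
\begin{equation*}
\vec d\leq d_0\,(\mathbb I-C\mathbb A)^{-1}\cdot\mathbb E\cdot\vec e+\varepsilon\,(\mathbb I-C\mathbb A)^{-1}\cdot\mathbb A\cdot\vec D,
\end{equation*}
with $\mathbb A$, $\mathbb E$ exactly as in (\ref{def_forAPProof}). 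Feeding this into the final step of (\ref{scheme_M2Final}) reproduces Lemma 2, giving
\begin{equation*}
|f^{n+1}-M^{n+1}|\leq|f^n-M^n|\,R_1(\lambda)+\vec R_2(\lambda)\cdot\vec D,
\end{equation*}
with the same $R_1$, $\vec R_2$.

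Next I would invoke the nilpotency argument from Theorem 2 without change: since the Runge–Kutta method is explicit, $\mathbb A$ is strictly lower triangular, hence $\mathbb A^\nu=0$, and the Neumann series gives $\mathbb E^{-1}(\mathbb I-C\mathbb A)^{-1}\mathbb E=\mathbb I+\mathbb B+\cdots+\mathbb B^{\nu-1}$ with $\mathbb B_{ij}=\tfrac{C\lambda}{\mu}a_{ij}$ of order $O(\lambda)$ on the first subdiagonal. Therefore $R_1(\lambda)=O(e^{-\lambda}\lambda^{\nu-1})=O(\varepsilon)$ and, similarly, each component of $\vec R_2(\lambda)$ is $O(\varepsilon)$ as $\varepsilon\to 0$, \emph{provided} the quantities $D_i$ stay bounded. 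This is where the new content lies: I must check that $\partial_t M^{(i)}$ does not blow up as $\varepsilon\to 0$. By construction (see the computation of $\partial_t M^{(j)}$ via (\ref{scheme_ptM})–(\ref{scheme_ptT})), $\partial_t M^{(i)}$ is expressed purely in terms of spatial moments of $v\cdot\nabla_x f^{(i)}$ and the macroscopic fields $\rho^{(i)},u^{(i)},T^{(i)}$; none of these involves $1/\varepsilon$, so $D_i=O(1)$ uniformly in $\varepsilon$, and the $\varepsilon$-factors in $\vec R_2$ genuinely force the drift term to vanish. From $|f^{n+1}-M^{n+1}|\to 0$ and the fact that the moments of $f^{(i)}$ (hence of $M^{(i)}$, hence the parameters of $M^{n+1}$) are evolved by the second equations of (\ref{scheme_M2StepK})–(\ref{scheme_M2Final}), which are precisely the moment equations (\ref{eqn_Exp2Moment}) — consistent with the Euler system (\ref{EulerLimit}) at leading order once $f^{(i)}\to M^{(i)}$ — one concludes that $f^{n+1}$ relaxes to a Maxwellian whose macroscopic quantities satisfy the discrete Euler limit. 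Hence the scheme is AP.

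The main obstacle, and the only point requiring care beyond transcribing the ExpRK-F proof, is the uniform-in-$\varepsilon$ control of $\partial_t M^{(i)}$: one must verify that the coupled computation of $M^{(i)}$ and $\partial_t M^{(i)}$ closes on $\varepsilon$-independent data at every stage, so that Assumption 2 and the nilpotency estimate can be applied with $\vec D=O(1)$. A secondary subtlety is that when $c_\nu=1$ the naive "large $\lambda$ on the shoulder of the exponential" heuristic fails; this is exactly why the matrix estimates of Lemmas 1–2 are needed, and since those estimates are purely algebraic in the Shu–Osher/Butcher coefficients they apply here verbatim. For brevity I would state that the remaining computations are identical to those in the proof of Theorem 2 and omit them.
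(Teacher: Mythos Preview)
Your proposal is correct and follows essentially the same route as the paper: redefine $d_i=|f^{(i)}-M^{(i)}|$ and $D_i=|v\cdot\nabla_x f^{(i)}+\partial_t M^{(i)}|$, observe that the scheme (\ref{scheme_M2}) has the same algebraic structure as (\ref{scheme_ExpRK}), and then reproduce Lemmas~1--2 and the nilpotency estimate of Theorem~2 verbatim to obtain (\ref{estimate_distancefn_condense}) with the new $\vec D$. Your explicit check that $\partial_t M^{(i)}$ is computed from $\varepsilon$-independent data (hence $\vec D=O(1)$) and your remark on the closure of the moment equations to Euler once $f^{(i)}\to M^{(i)}$ are useful clarifications that the paper leaves implicit.
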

%

\section{Numerical Example}
\subsection{Convergence Rate Test}
In this example, we use smooth data to check the convergence rate of both 
methods. The problem is adopted from \cite{Filbet}: $1$ dimensional in 
$x$ and $2$ dimensional in $v$. Initial distribution is given by 
\begin{equation}\label{num_ini}
    f(t=0,x,v)=\frac{\rho_0(x)}{2}\left(e^{-\frac{|v-u_1(x)|^2}{T_0(x)}}+e^{\frac{|v-u_2(x)|^2}{T_0(x)}}\right)
\end{equation}
with
\begin{align*}
    \rho_0(x)&=\frac{1}{2}\left(2+\sin{\left(2\pi x\right)}\right),\\
       u_1(x)&=\left[0.75,-0.75\right]^T,\hspace{0.5cm}u_2(x)=\left[-0.75,0.75\right]^T,\\
       T_0(x)&=\frac{1}{20}\left(5+2\cos{\left(2\pi x\right)}\right).
\end{align*}
Domain is chosen as $x\in\left[0,1\right]$ and periodic boundary condition on $x$ is 
used.
    Note that the definition of $\rho_0$, $u_{1/2}$ and $T_0$ do not represent the 
    number density, average velocity and temperature.
    
As one can see, the initial data is summation of two Gaussian functions 
centered at $u_1$ and $u_2$ respectively, and is far away from the Maxwellian. 
To check the convergence rate, we use $N_x=128, 256, 512, 1024$ grid points on 
$x$ space, and $N_v=32$ points on $v$ space. Time stepping $\Delta t$ is 
chosen to satisfy CFL condition with CFL number being $0.5$. We measure the $L_1$ error 
of $\rho$ and compute the decay rate through the following formula \cite{YanJin_StrongAP} 
\begin{equation}
    \text{error}_{\Delta x}=\max_{t=t^n}{\frac{\|\rho_{\Delta x}(t)-\rho_{2\Delta 
    x}(t)\|_1}{\|\rho_{2\Delta x}(t)\|_1}},
\end{equation}
with $\Delta x=\frac{1}{N_x}$. Theoretically, a $k$th order numerical scheme 
should give $\text{error}_{\Delta x}<C\left(\Delta x\right)^k$ for $\Delta x$ 
small enough.

We compute this problem using spectral method \cite{MP_FastSpectralCollision} 
in $v$, WENO of order 3/5 \cite{Shu_WENOReview} for $x$. For time discretization, we use the second and third order Runge-Kutta from \cite{HNW}, Table 1 page 135. We denote the four schemes under consideration as ExpRK2-F, ExpRK2-V, ExpRK3-F and ExpRK3-V.

We compute the problem using the Maxwellian, and a distribution function 
away from the Maxwellian given above as initial data, for 
$\epsilon=1, 0.1, 10^{-3}, 10^{-6}$. Results are shown in Figure 
\ref{fig_Ex1_M1-2}. We also give the convergence rate Table \ref{tab_Ex1}. One 
can see that in kinetic regime, when $\varepsilon=1$, the two methods are 
almost the same, but as $\varepsilon$ becomes smaller, in the intermediate 
regime, for example $\varepsilon=0.1$ for the second order schemes and $\varepsilon=10^{-3}$ for second and third order schemes with Maxwellian data, ExpRK-V performs better then ExpRK-F. In the hydrodynamic regime, however, 
the two methods give similar results again shown by the two pictures for 
$\varepsilon=10^{-6}$. It is remarkable that the third order methods achieve almost order $5$ (the maximum achievable by the WENO solver) in many regimes. 

\begin{figure}
    \begin{subfigure}
                \centering
                \includegraphics[width=0.5\textwidth,height=0.2\textheight]{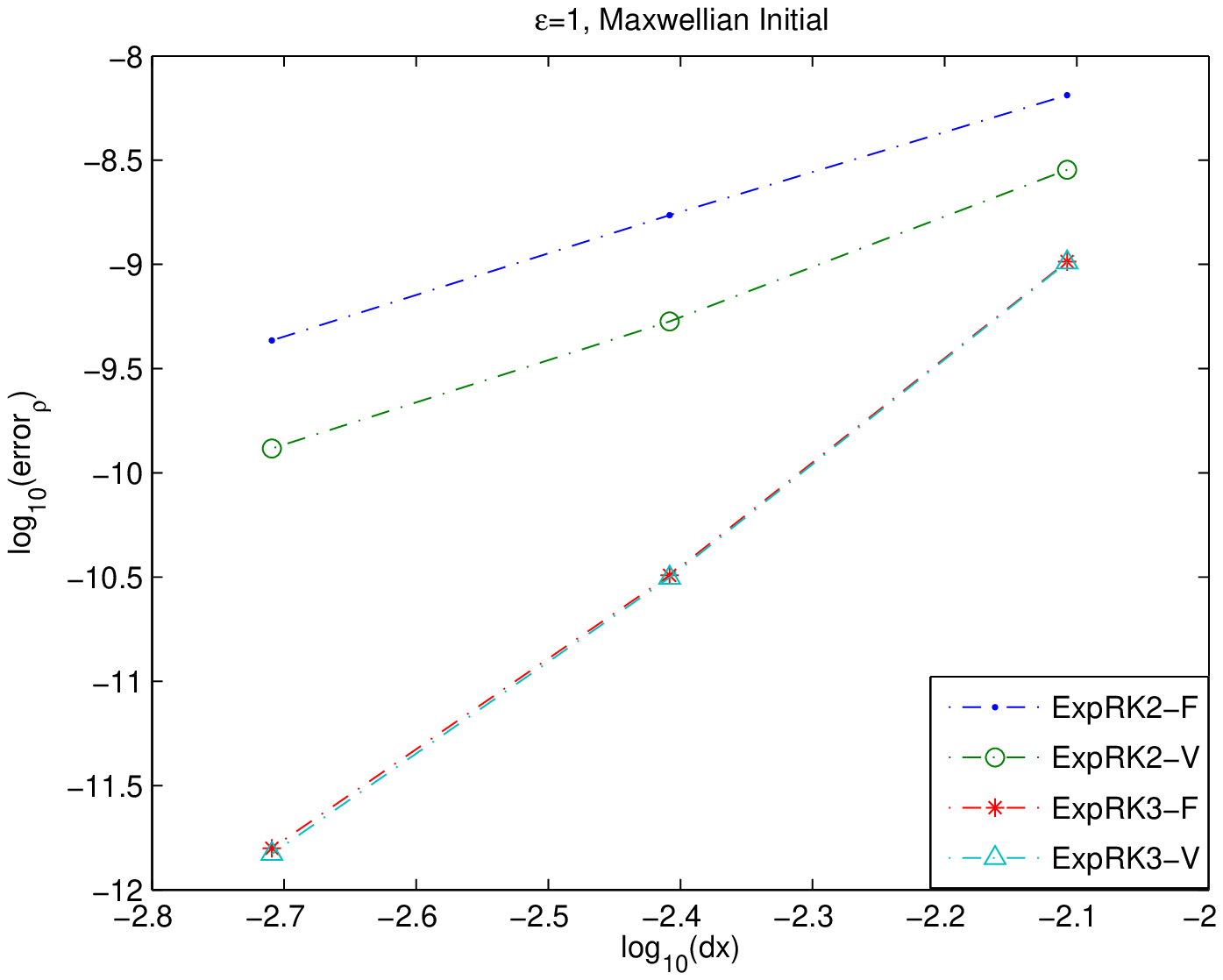}
        \end{subfigure}%
        \begin{subfigure}
                \centering
            \includegraphics[width=0.5\textwidth,height=0.2\textheight]{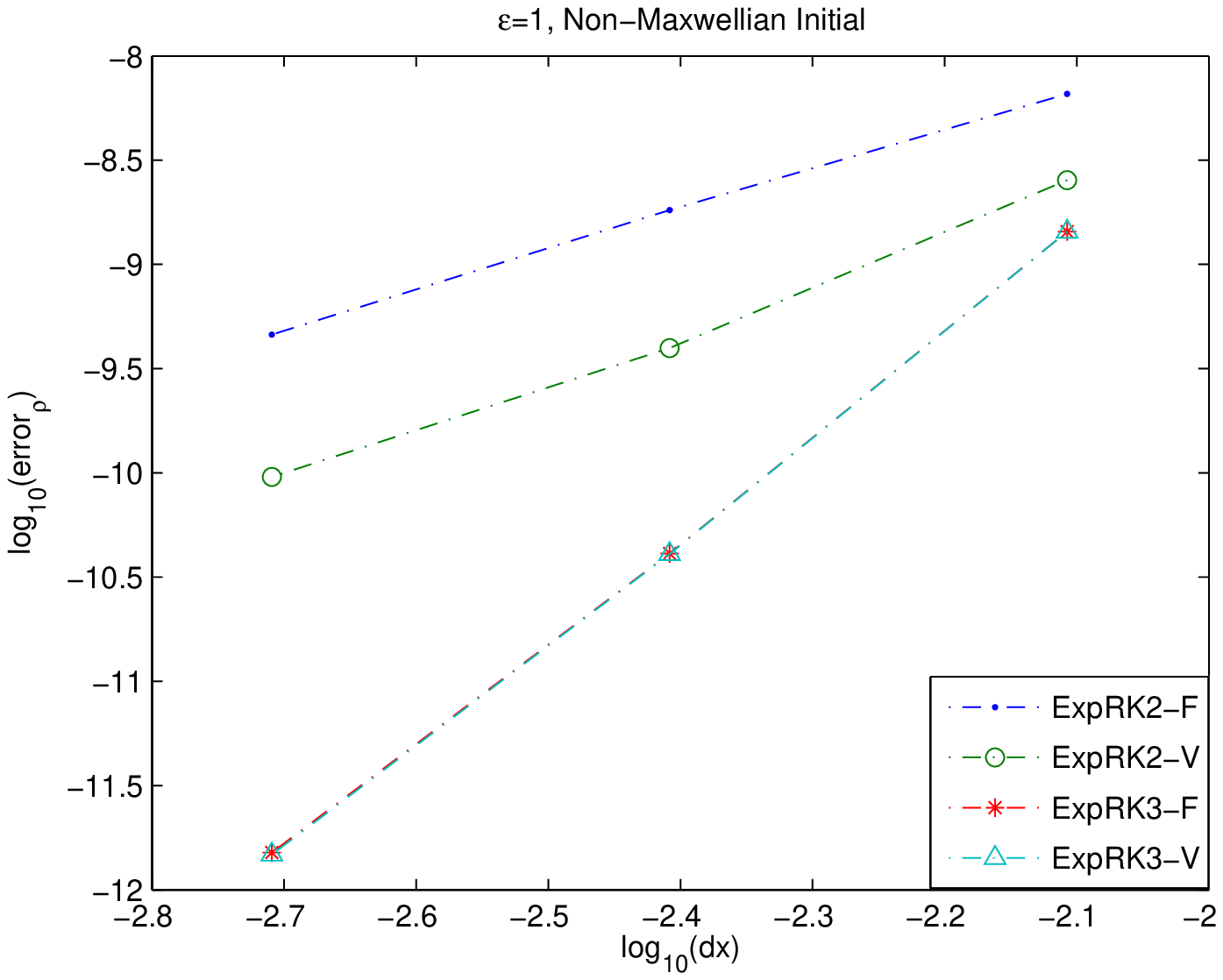}
        \end{subfigure}
    \begin{subfigure}
                \centering
                \includegraphics[width=0.5\textwidth,height=0.2\textheight]{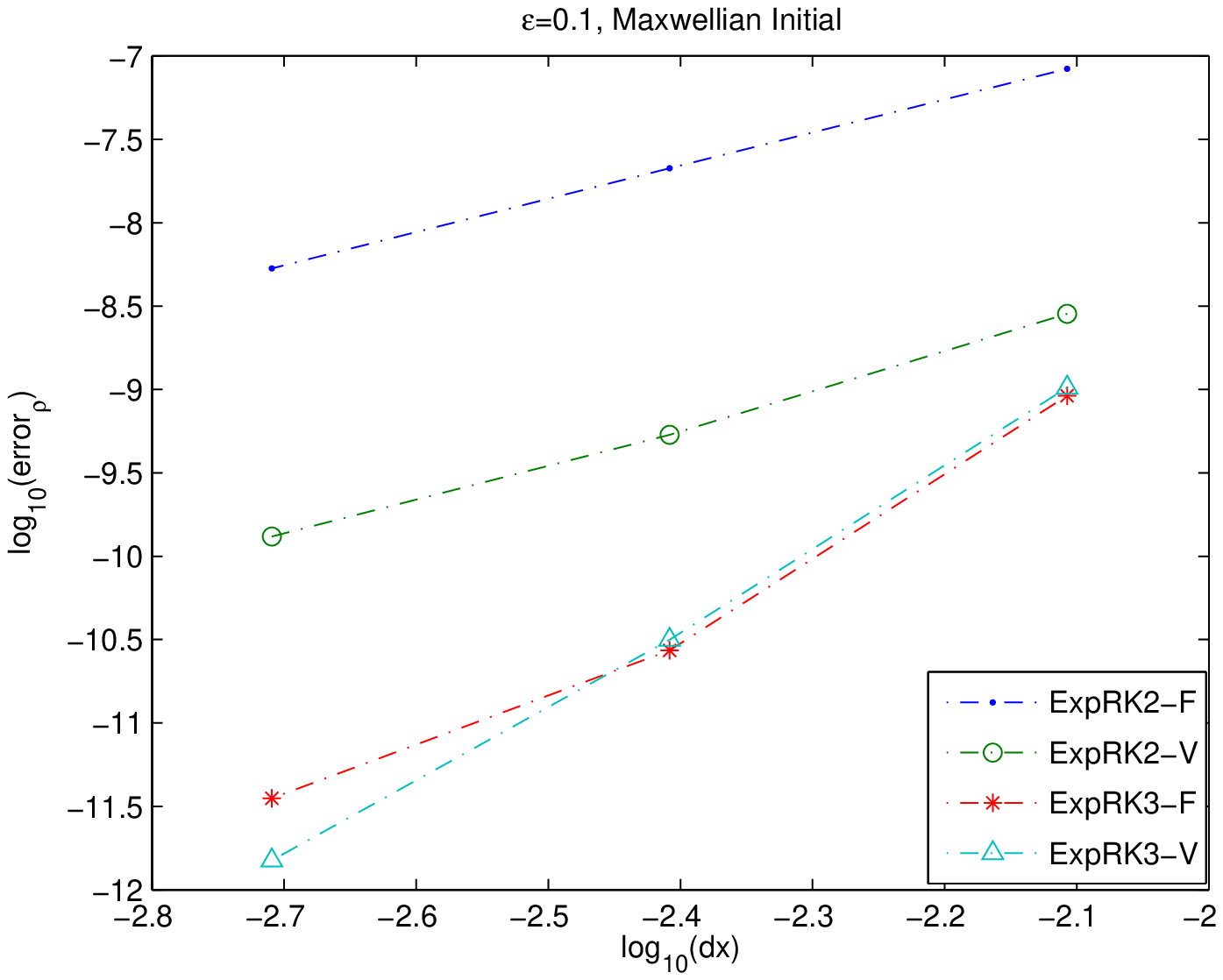}
        \end{subfigure}%
        \begin{subfigure}
                \centering
                \includegraphics[width=0.5\textwidth,height=0.2\textheight]{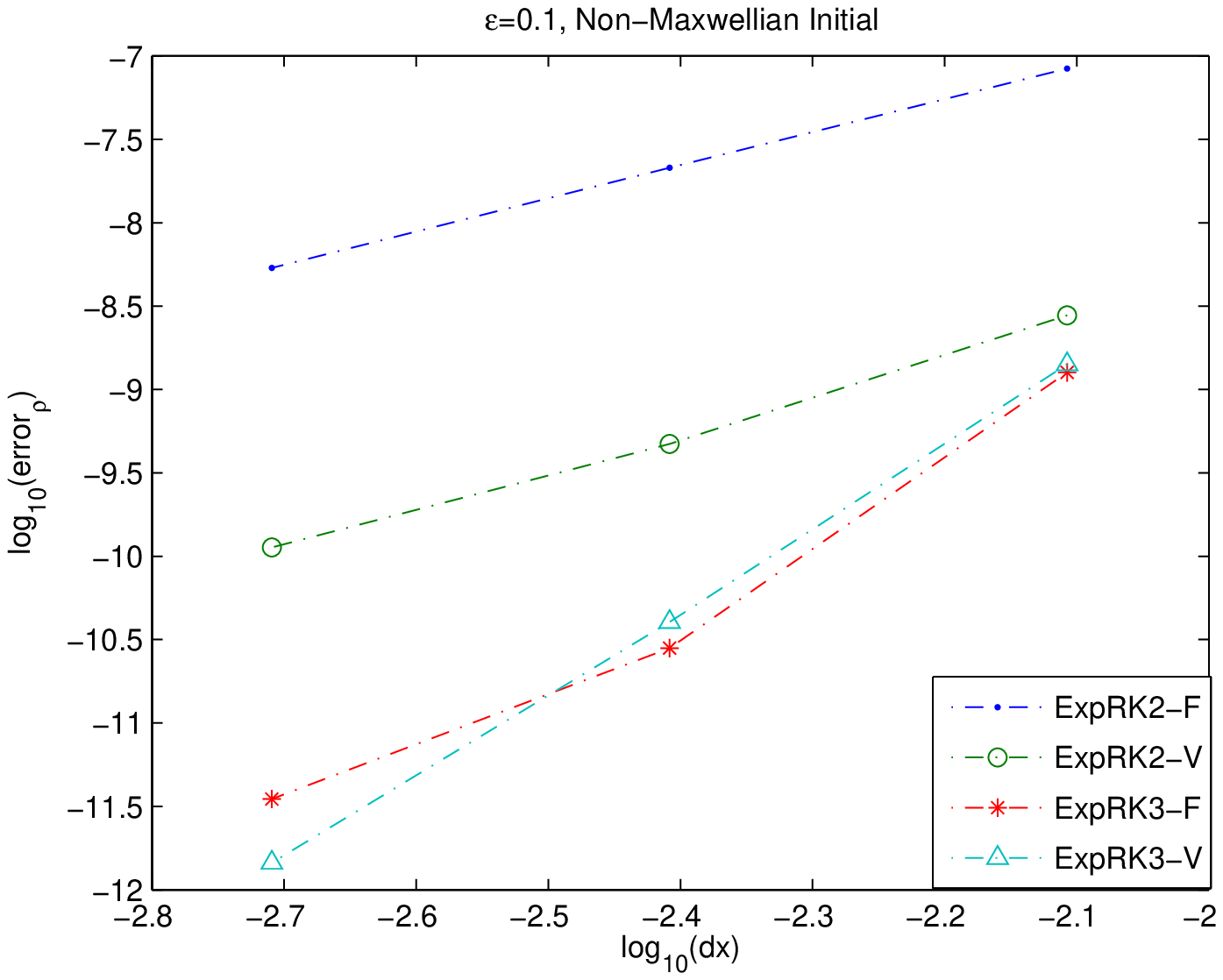}
        \end{subfigure}
    \begin{subfigure}
                \centering
                \includegraphics[width=0.5\textwidth,height=0.2\textheight]{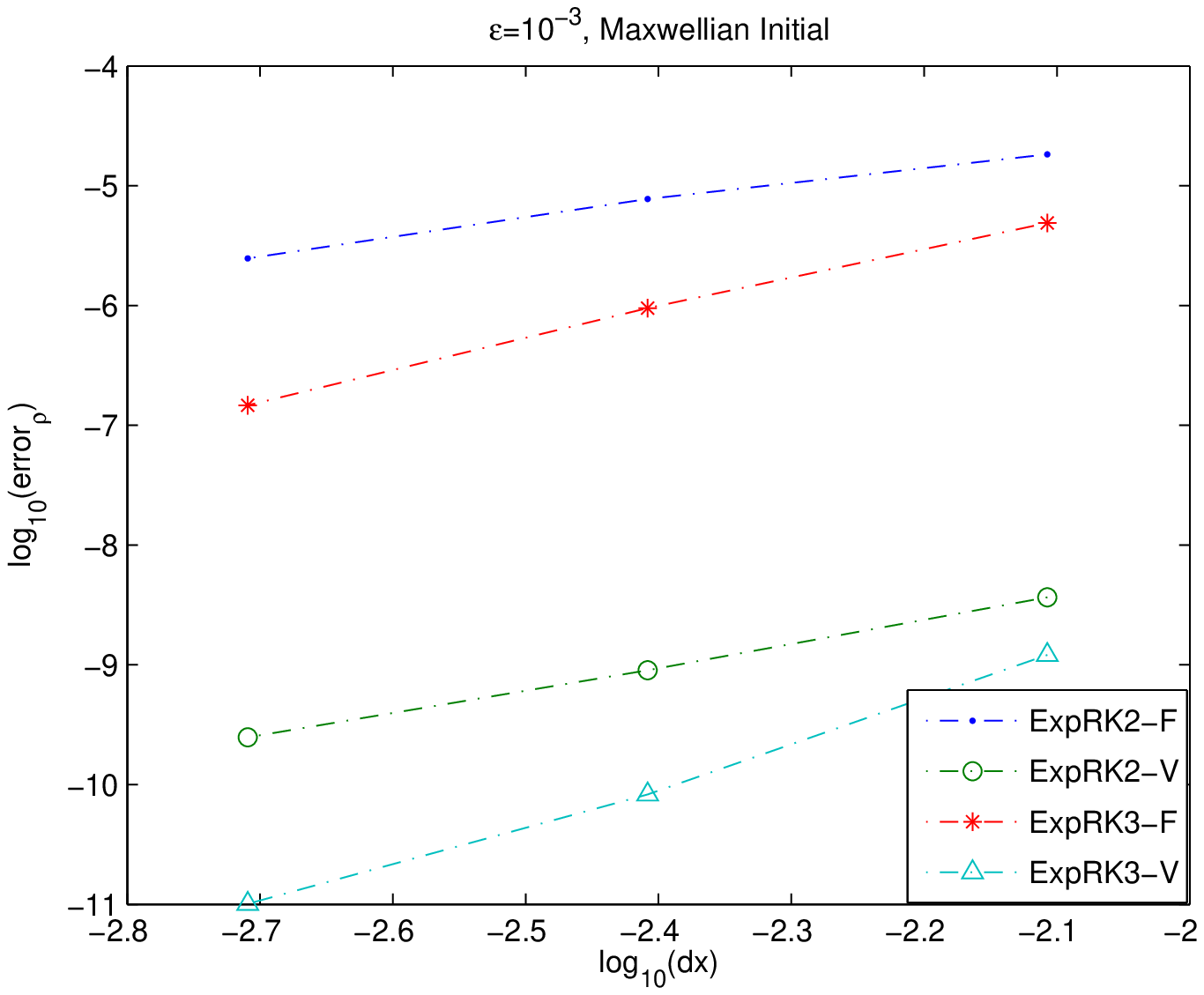}
        \end{subfigure}%
        \begin{subfigure}
                \centering
                \includegraphics[width=0.5\textwidth,height=0.2\textheight]{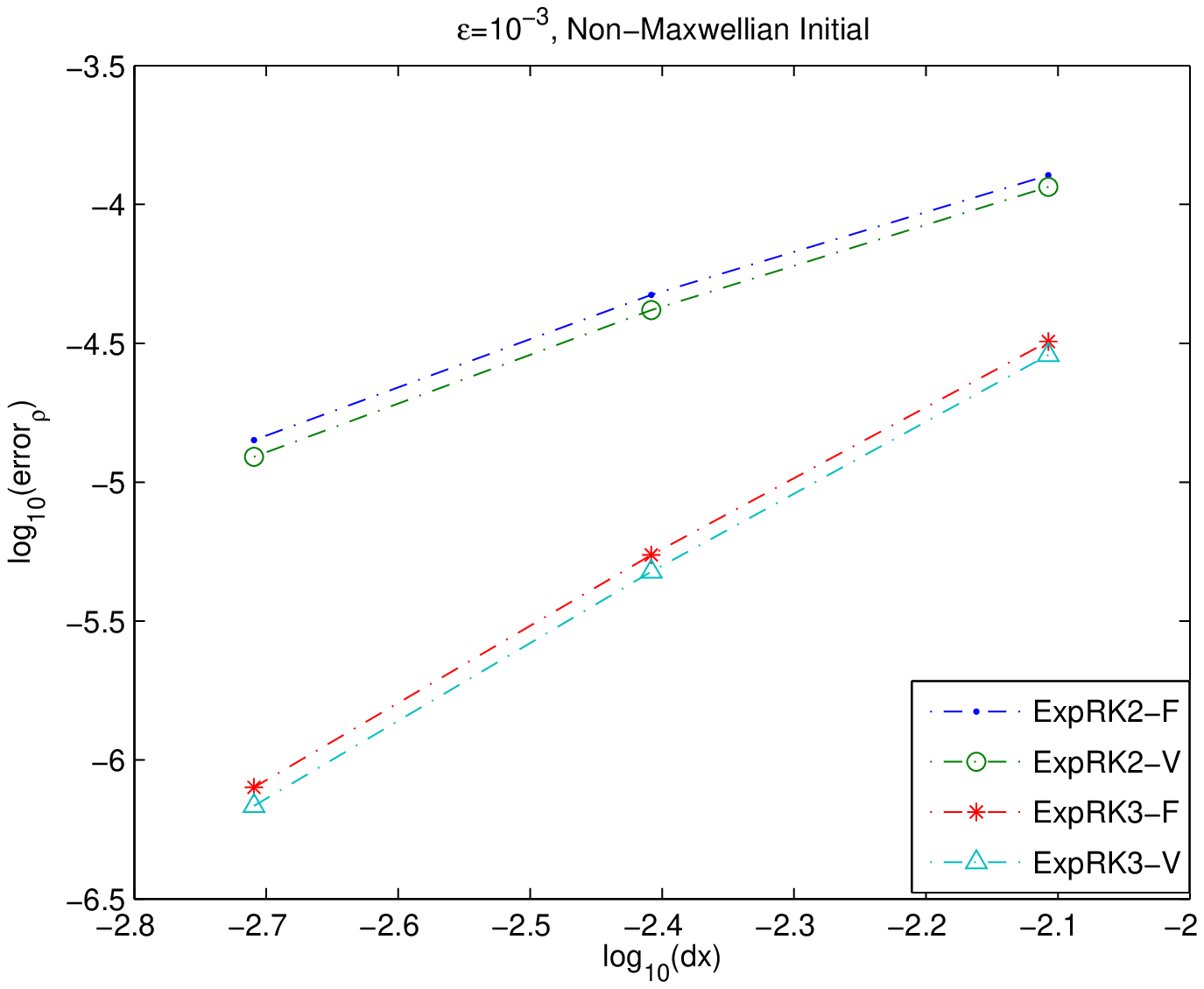}
        \end{subfigure}        \label{fig_Ex1_M1_ep6}
    \begin{subfigure}
                \centering
                \includegraphics[width=0.5\textwidth,height=0.2\textheight]{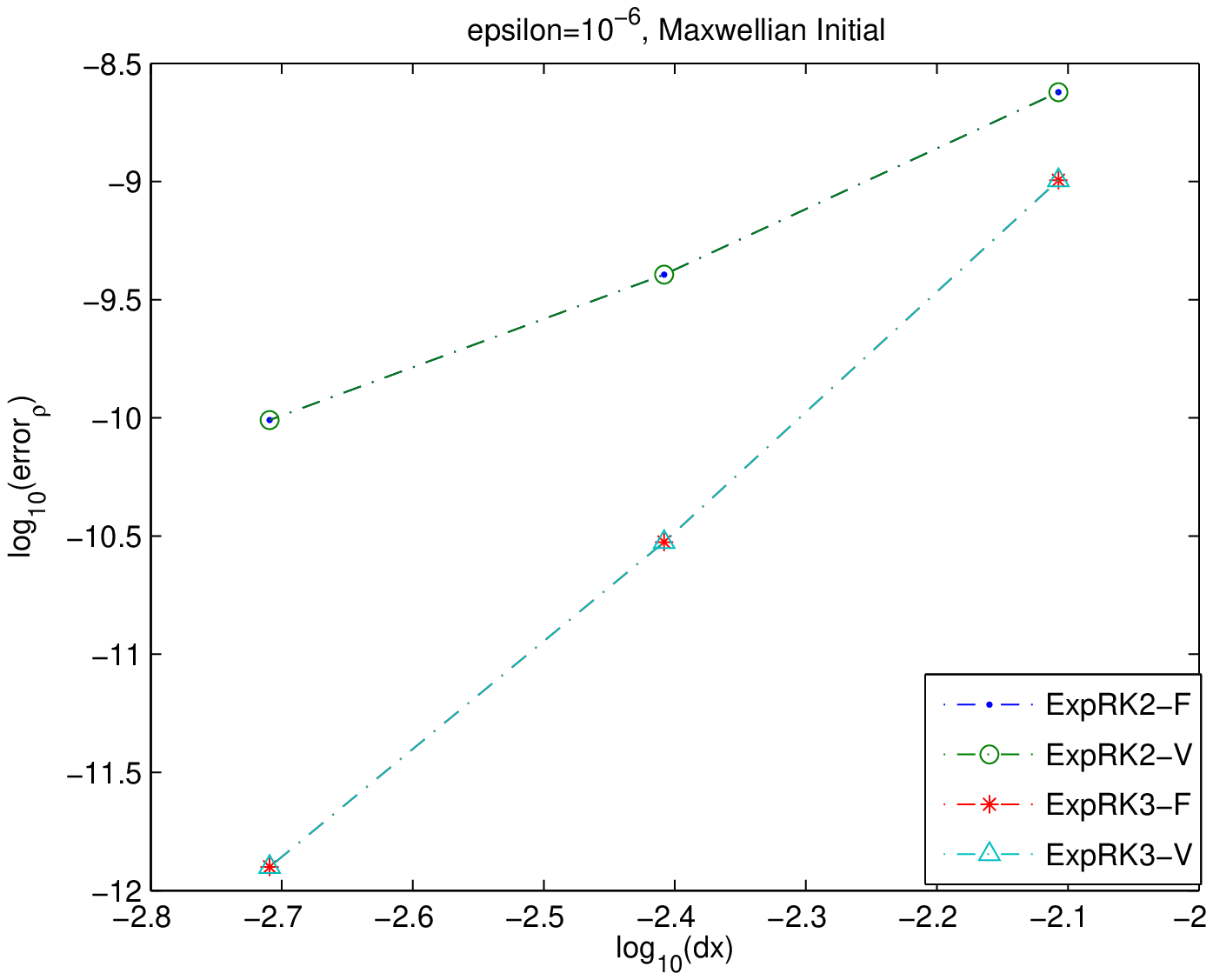}
        \end{subfigure}%
        \begin{subfigure}
                \centering
                \includegraphics[width=0.5\textwidth,height=0.2\textheight]{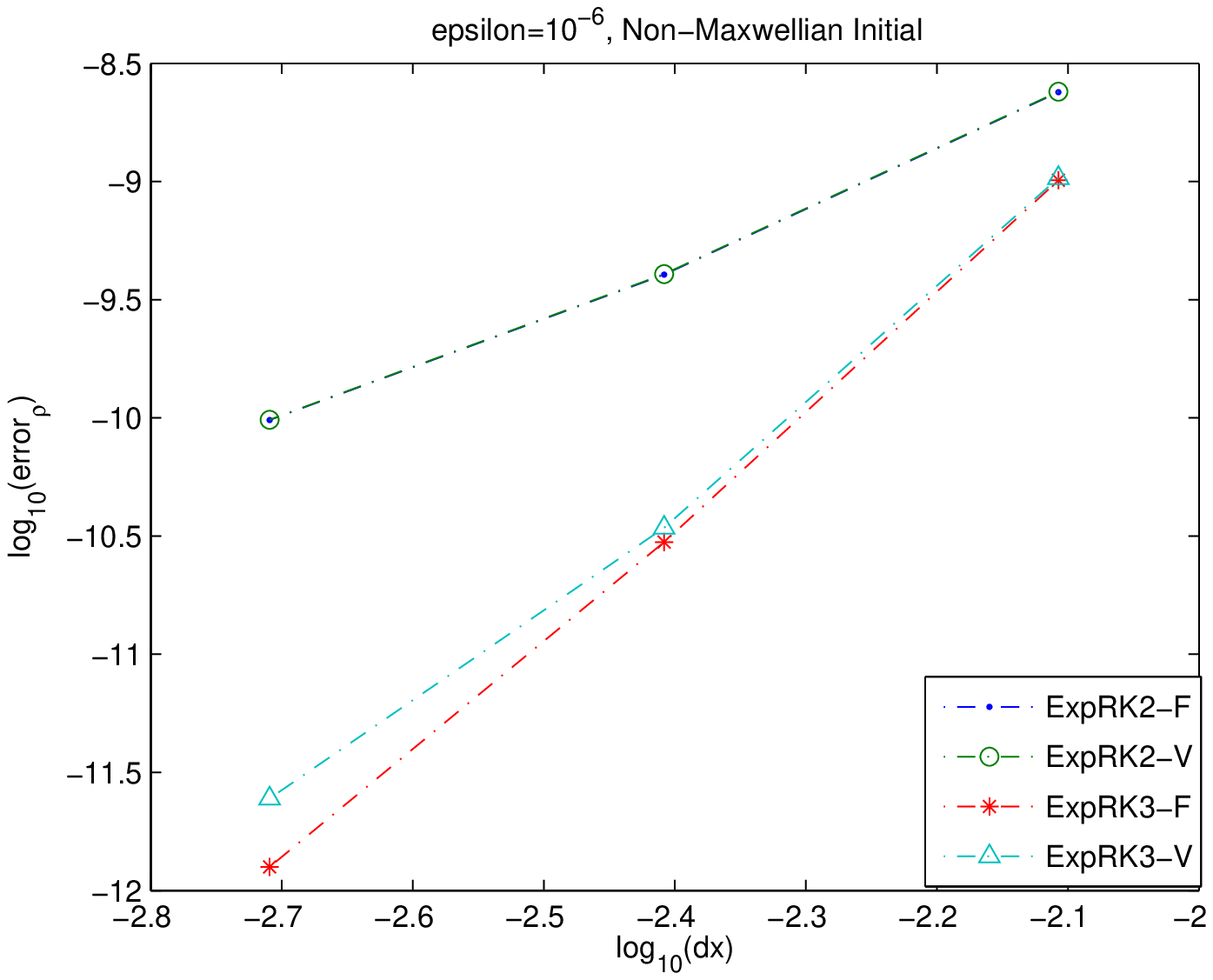}
        \end{subfigure}

        \label{fig_Ex1_M1-2}
        \caption{Convergence rate test. In each picture, 4 lines are plotted: 
        the lines with dots, circles, stars and triangles on them are given by 
    results of ExpRK2-F, ExpRK2-V, ExpRK3-F and ExpRK3-V respectively. The 
left column is for Maxwellian initial data, and the right column is for 
initial data away from Maxwellian (\ref{num_ini}). Each row, from the top to 
the bottom, shows results of $\epsilon=1/0.1/10^{-3}/10^{-6}$ respectively.}
    \end{figure}

\begin{table}[hbt]\label{tab_Ex1}
        \centering
        \small
        \begin{tabular}{|c|c|c|c|c|c|}
            \hline
            \multicolumn{2}{|c|}{Initial Distribution}&\multicolumn{2}{|c|}{Maxwellian Initial} & \multicolumn{2}{|c|}{Non-Maxwellian Initial}\\
            \hline
            \multicolumn{2}{|c|}{$N_x$}&$128-256-512$&$256-512-1024$&$128-256-512$&$256-512-1024$\\
            \hline
            \multirow{4}{*}{$\varepsilon=1$} & ExpRK2-F&1.91327&1.99502&1.84968&1.98504\\
                                          &ExpRK2-V&2.41608&2.02347&2.67733&2.05436\\
                                          &ExpRK3-F&4.99725&4.35014&5.12959&4.76788\\
                                          &ExpRK3-V&5.02508&4.40379&5.13515&4.79080\\
            \hline
            \multirow{4}{*}{$\varepsilon=0.1$}&ExpRK2-F&1.98218&1.99539&1.97725&1.99454\\
                                           &ExpRK2-V&2.41411&2.02293&2.56620&2.05830\\
                                           &ExpRK3-F&5.07621&2.94707&5.49587&3.00335\\
                                           &ExpRK3-V&5.02220&4.39651&5.13859&4.79264\\
            \hline
            \multirow{4}{*}{$\varepsilon=10^{-3}$}&ExpRK2-F&1.23711&1.64976&1.43331&1.73501\\
                                               &ExpRK2-V&2.02344&1.85924&1.47466&1.75496\\
                                               &ExpRK3-F&2.36140&2.69178&2.55225&2.78275\\
                                               &ExpRK3-V&3.86882&3.03223&2.59114&2.80353\\
            \hline
            \multirow{4}{*}{$\varepsilon=10^{-6}$}&ExpRK2-F&2.56137&2.04519&2.56137&2.04519\\
                                                  &ExpRK2-V&2.56137&2.04519&2.56383&2.04859\\
                                                  &ExpRK3-F&5.08829&4.56695&5.08830&4.56699\\
                                                  &ExpRK3-V&5.08830&4.56704&4.91909&3.80638\\
            \hline
        \end{tabular}
        \caption{Convergence rate for ExpRK methods with different initial 
        data, in different regimes.}
\end{table}

\subsection{A Sod Problem}
This simple example is adopted from \cite{YanJin_StrongAP} to check 
accuracy and AP of the numerical methods. It is a Riemann problem, and the 
solution to the associated Euler limit is a Sod problem.
\begin{equation*}
\begin{cases}
(\rho,u_x,u_y,T)=(1,0,0,1), & \text{if } x<0;\\
(\rho,u_x,u_y,T)=(1/8,0,0,1/4), & \text{if } x>0;
\end{cases}
\end{equation*}
In Figure \ref{fig_Ex2_bigep} (left), we show that when 
$\epsilon=0.01$ is comparably big, both the two new method proposed here match with the 
numerical results given by explicit scheme with dense mesh. Here the reference 
is given by Forward Euler with $\Delta x=1/500$ and $h=0.0001$. In 
Figure \ref{fig_Ex2_bigep} (right), AP property is shown: it is clear that for 
$\epsilon=10^{-6}$, numerical results capture the Euler limit -- the Euler 
limit is computed by kinetic scheme \cite{Perthame_KineticScheme}. All plots 
are given at time $t=0.2$.
\begin{figure}
    \begin{center}
    \includegraphics[height=5.5cm]{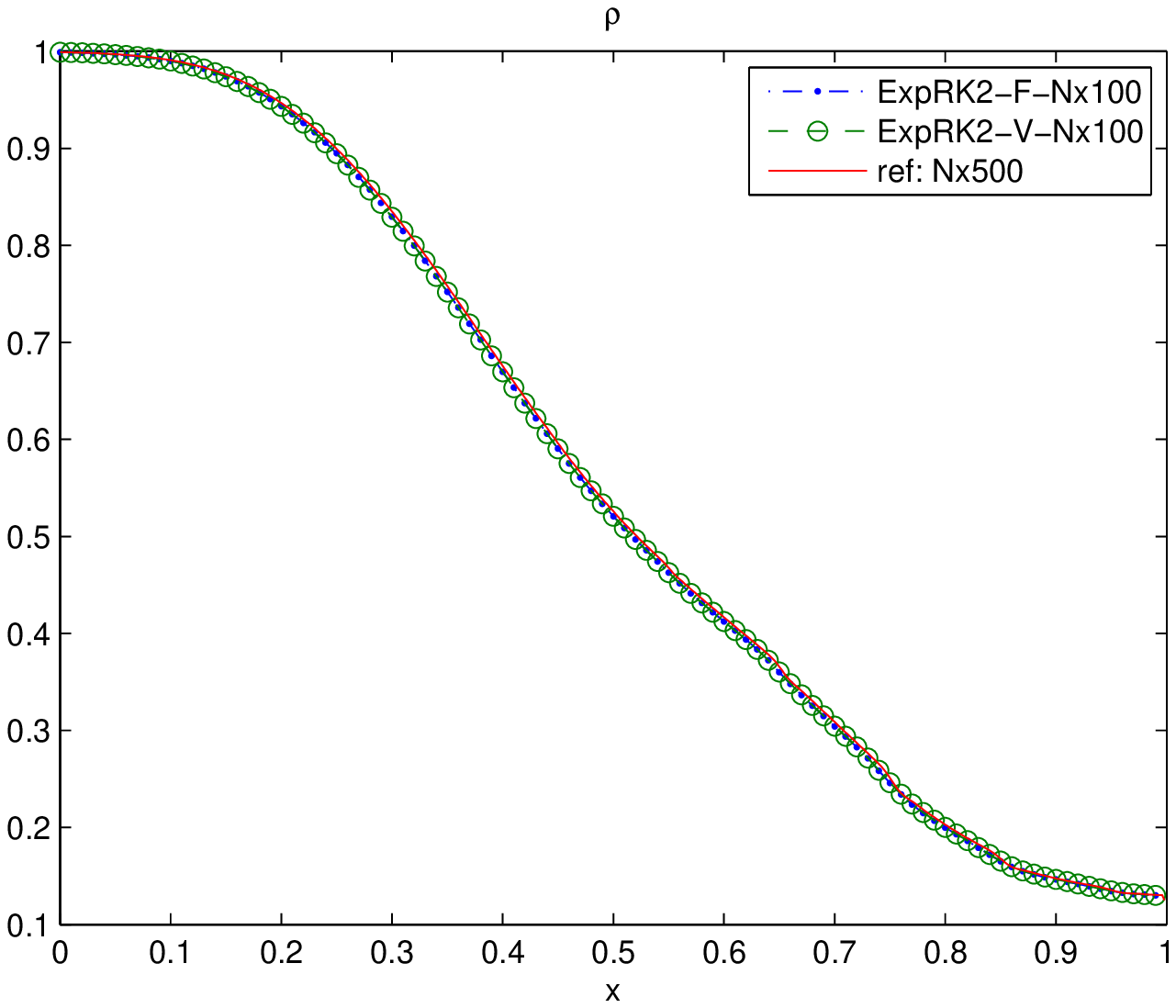}
     \includegraphics[height=5.5cm]{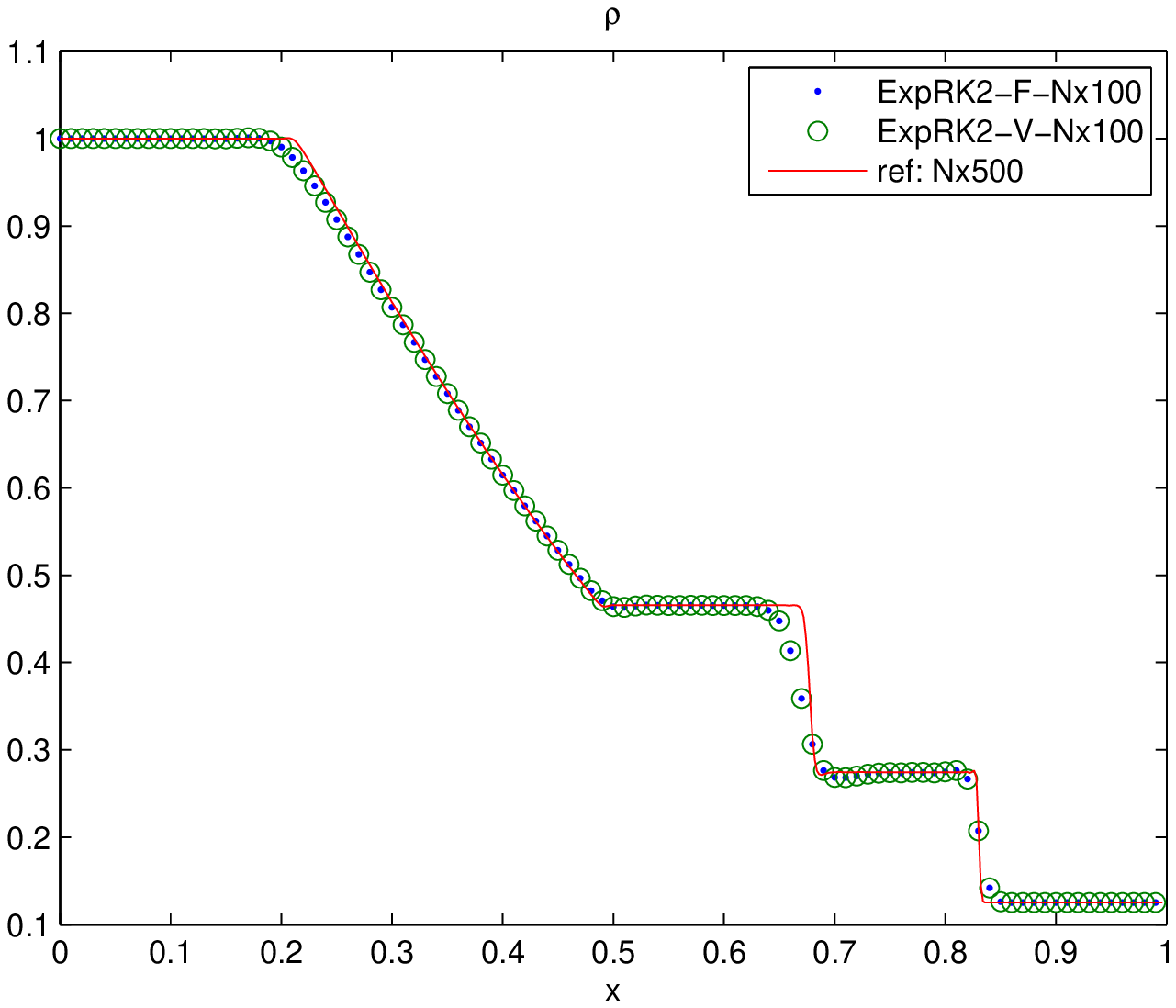}    
    \includegraphics[height=5.5cm]{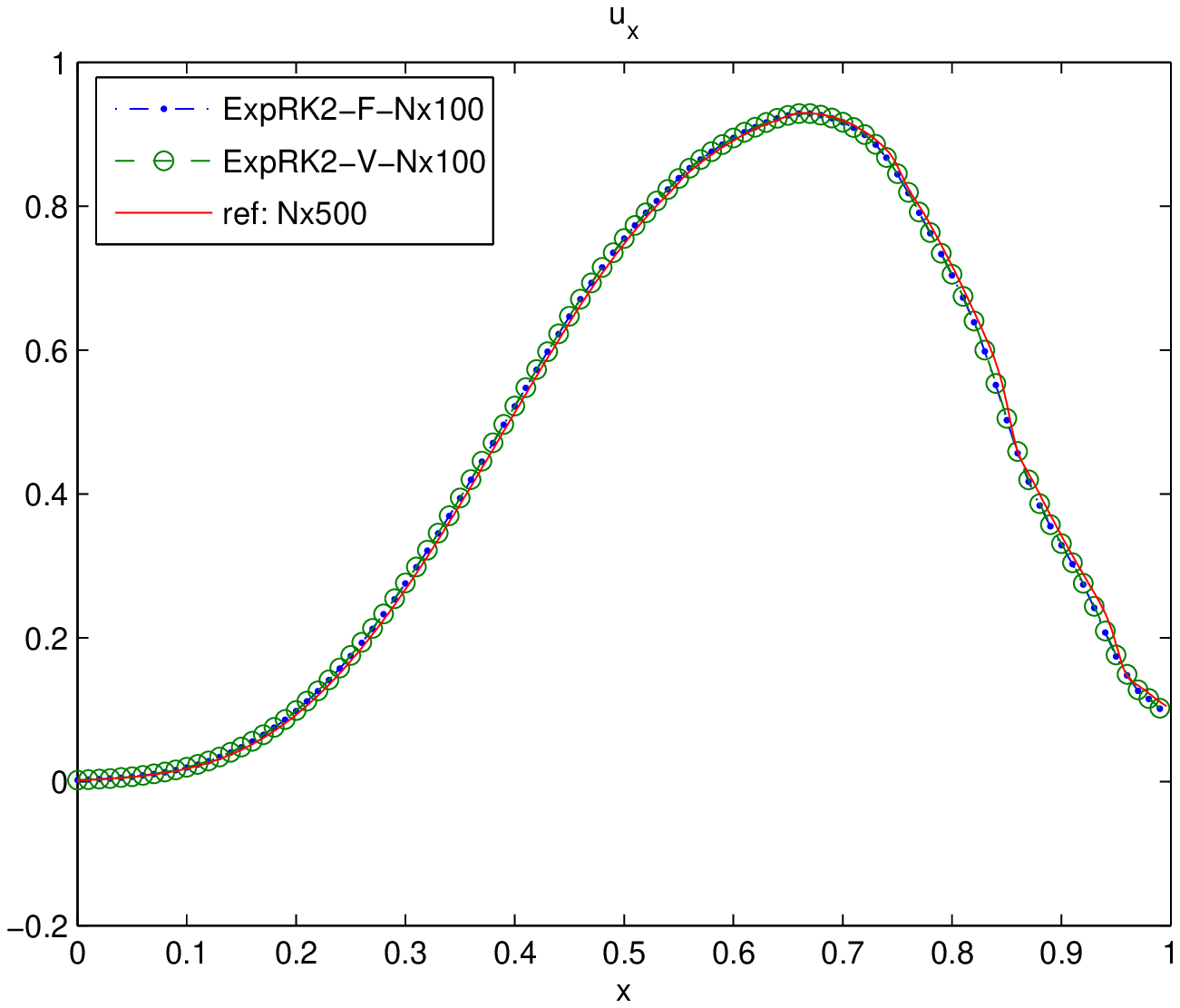}
    \includegraphics[height=5.5cm]{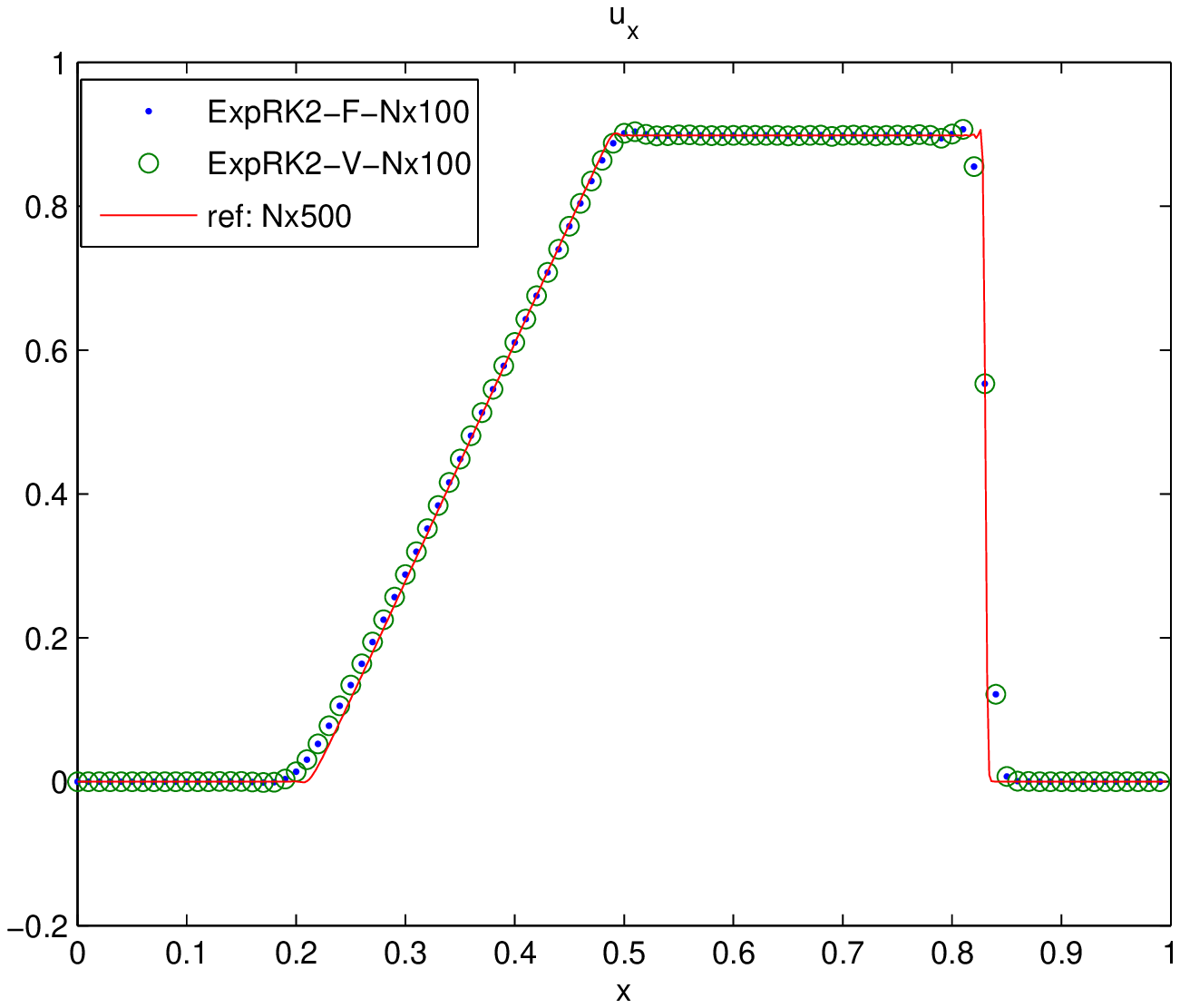}
    \includegraphics[height=5.5cm]{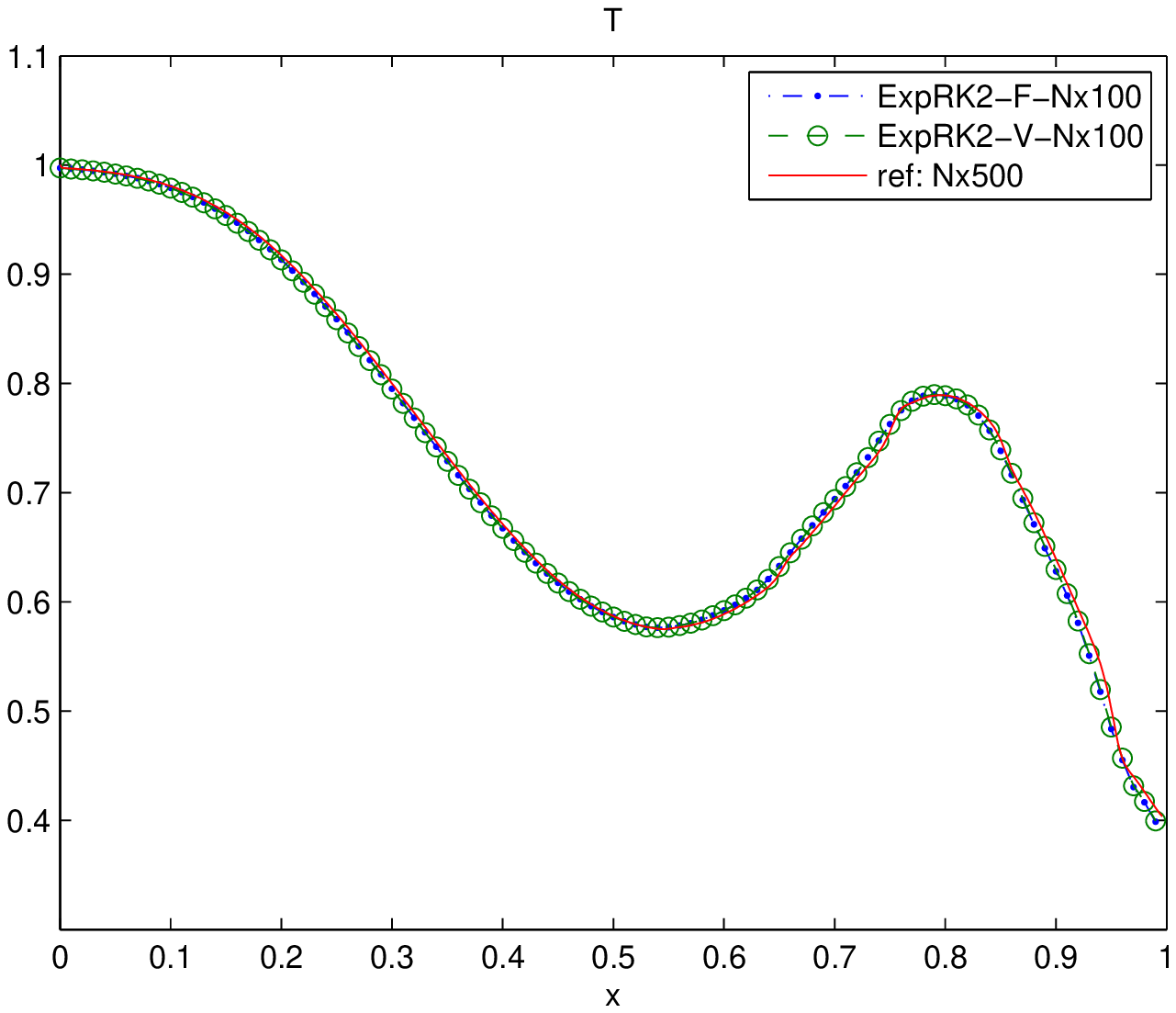}
    \includegraphics[height=5.5cm]{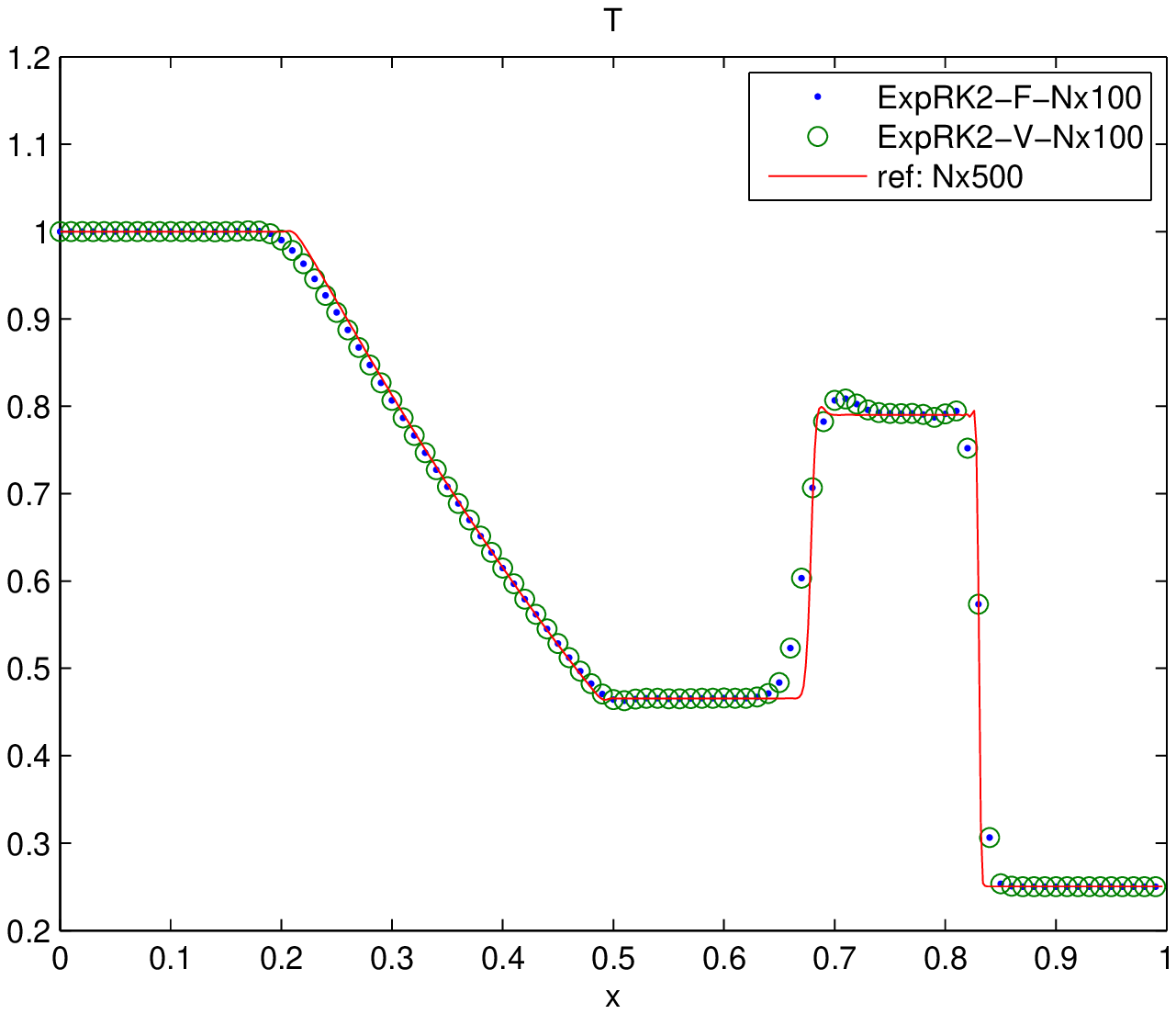}
    \caption{Consistency and AP. Left column: $\varepsilon=0.01$. The solid line is given by 
explicit scheme with dense mesh, while dots and circles are given by ExpRK2-F 
and ExpRK2-V respectively, both with $N_x=100$. $h=\Delta x/20$ satisfies the 
CFL condition with CFL number being $0.5$. Right column: For $\varepsilon=10^{-6}$, both methods capture the Euler 
    limit. The solid line is given by the kinetic scheme for the Euler equation, 
while the dots and circles are given by ExpRK2-F and ExpRK2-V. They perform 
well in rarefaction, contact line and shock.}
\end{center}
\label{fig_Ex2_bigep}
\end{figure}

\subsection{Mixing Regime}
In this example \cite{YanJin_StrongAP}, we show numerical results to a problem 
with mixing regime. This problem is difficult because $\varepsilon$ vary with 
respect to space. As what we do in the first example, we take identical data 
along one space direction, so it is $1D$ in space but $2D$ in velocity. An 
accurate AP scheme should be able to handle all $\varepsilon$ with 
considerably coarse mesh. Domain is chosen to be $x\in[-0.5,0.5]$, with 
$\varepsilon$ defined by 
\begin{equation}\label{def_epsilonMixingRegime}
    \epsilon=\begin{cases}
        \epsilon_0+0.5\left(\tanh{(6-20x)}+\tanh{(6+20x))}\right)\hspace{0.5cm}x<0.2;\\
        \epsilon_0\hspace{0.5cm}x>0.2
    \end{cases}
\end{equation}
where $\epsilon_0$ is $10^{-3}$. So $\varepsilon$ raise up from $10^{-3}$ to 
$O(1)$, and suddenly drop back to $10^{-3}$ as shown in Figure (\ref{fig_Ex3_epsilon}). Initial data is the give as 
\begin{equation}
    f(t=0,x,v)=\frac{\rho_0(x)}{4\pi T_0(x)}\left(e^{-\frac{|v-u_0(x)|^2}{2T_0(x)}}+e^{-\frac{|v+u_0(x)|^2}{2T_0(x)}}\right)
\end{equation}
with
\begin{equation}
    \begin{cases}
        \rho_0(x)=\frac{2+\sin{(2\pi x+\pi)}}{3},\\
        u_0(x)=\frac{1}{5}\left(\begin{array}{c}\cos{(2\pi x+\pi)}\\0\end{array}\right),\\
        T_0(x)=\frac{3+\cos{(2\pi x+\pi)}}{4}
    \end{cases}
\end{equation}
Periodic boundary condition on $x$ is applied.\\
We compute the problem using both methods proposed in this paper together with standard explicit Runge-Kutta 2 and 3 in time used as the underline methods in the exponential schemes. 

Results are plotted in 
Figure \ref{fig_Ex3_ep3}. The reference solution is computed with a very fine mesh in time. Both methods give excellent results simply taking a CFL condition of $0.5$ whereas explicit methods are forced to operate on a time scale 1000 times smaller. In particular, ExpRK3-V performs well uniformly on $\varepsilon$ by giving a more accurate description of the shock profiles.
\begin{figure}
    \centering
    \includegraphics[width=0.7\textwidth]{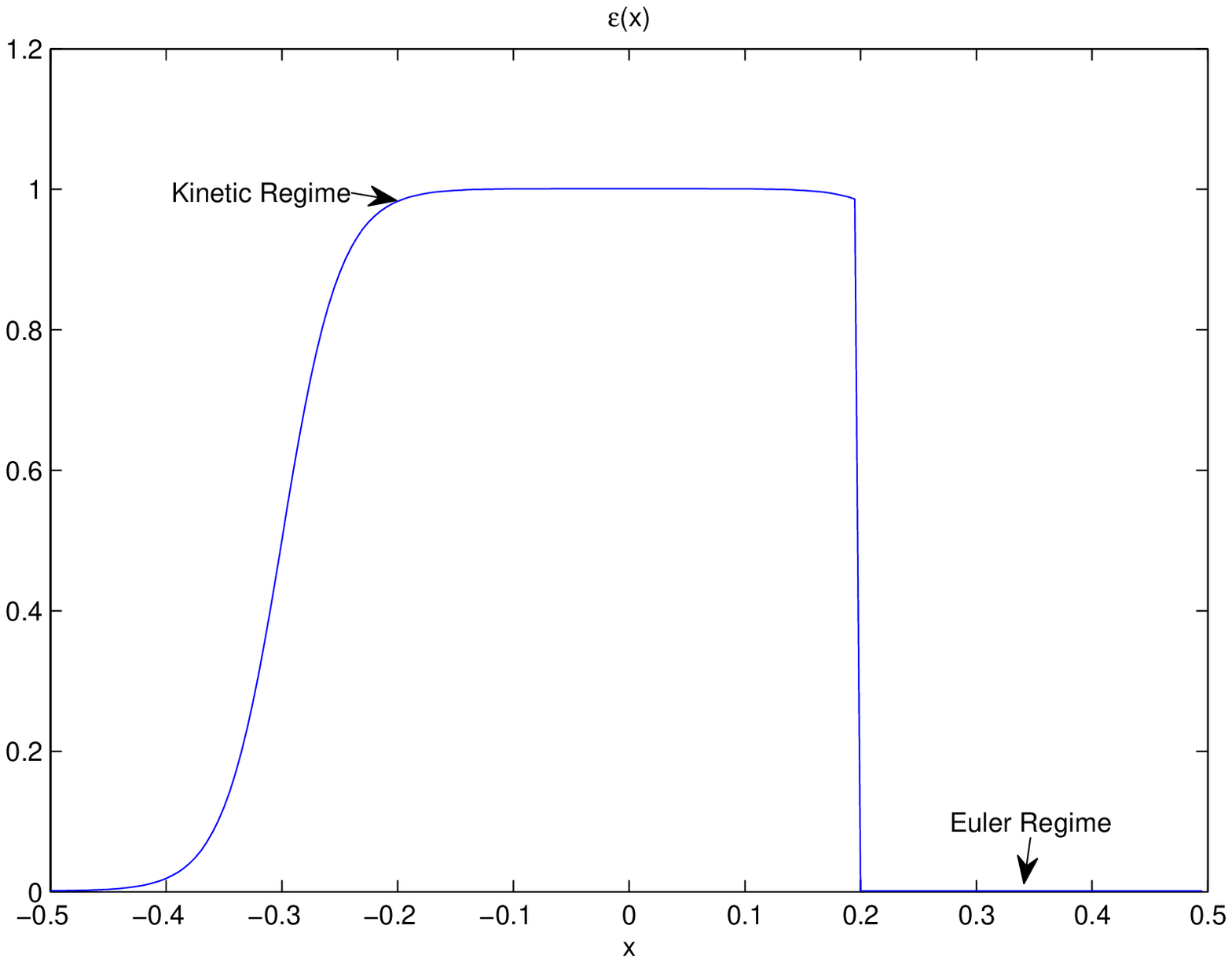}
    \caption{Mixing Regime: $\varepsilon(x)$}\label{fig_Ex3_epsilon}
\end{figure}
\begin{figure}
     \begin{subfigure}\centering
        \includegraphics[width=0.5\textwidth,height=0.25\textheight]{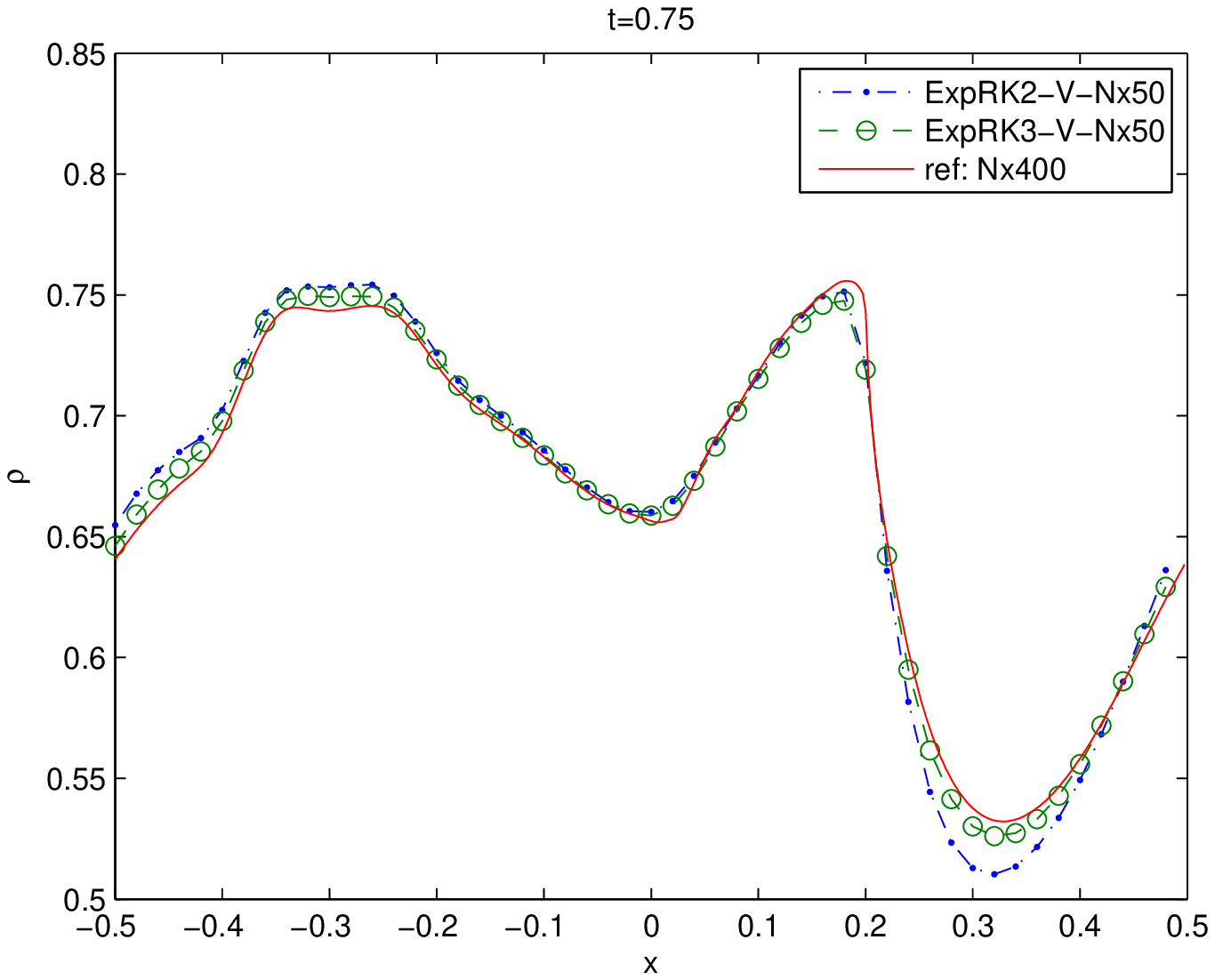}
     \end{subfigure}\begin{subfigure}\centering
        \includegraphics[width=0.5\textwidth,height=0.25\textheight]{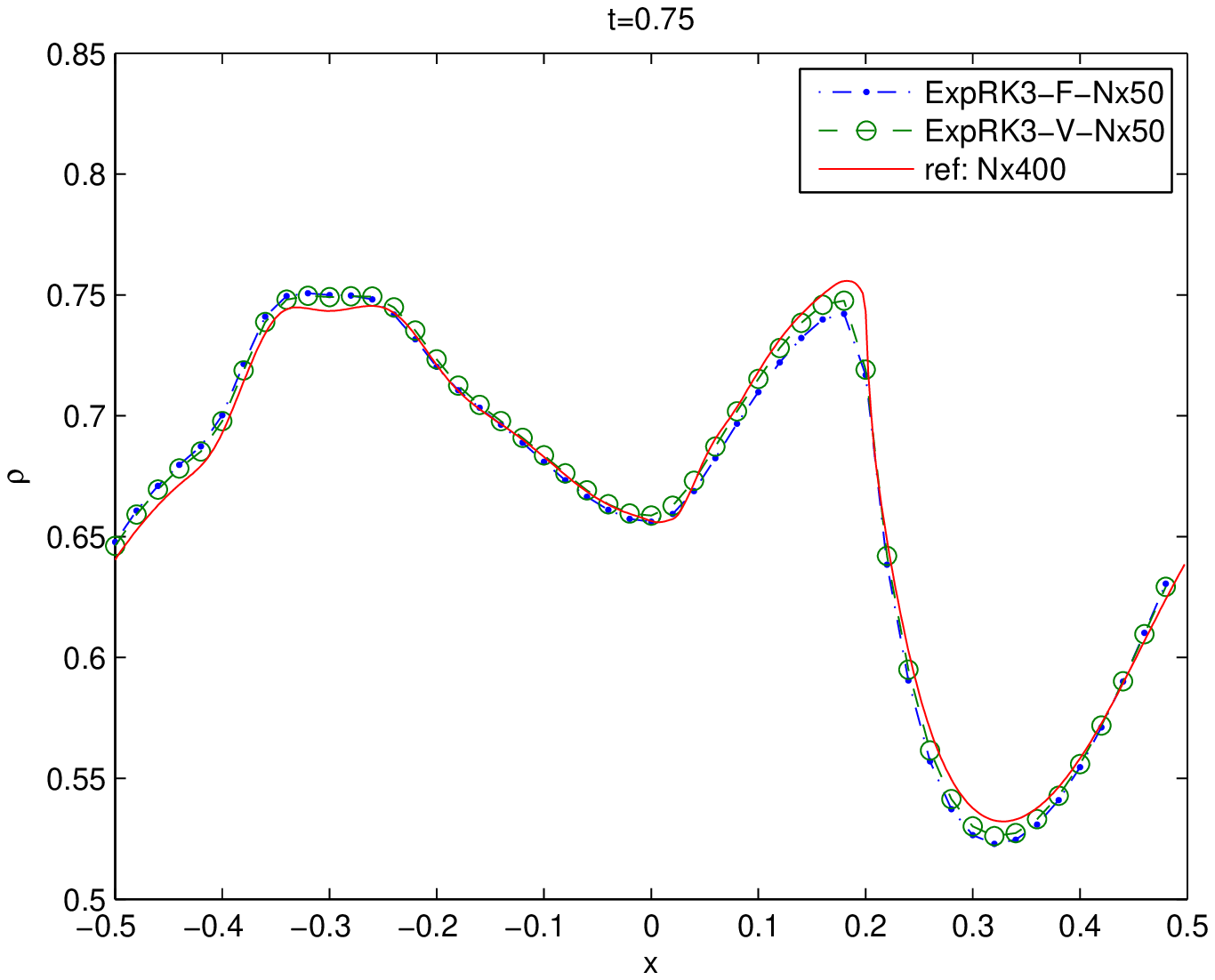}
    \end{subfigure}\begin{subfigure}\centering
       \includegraphics[width=0.5\textwidth,height=0.25\textheight]{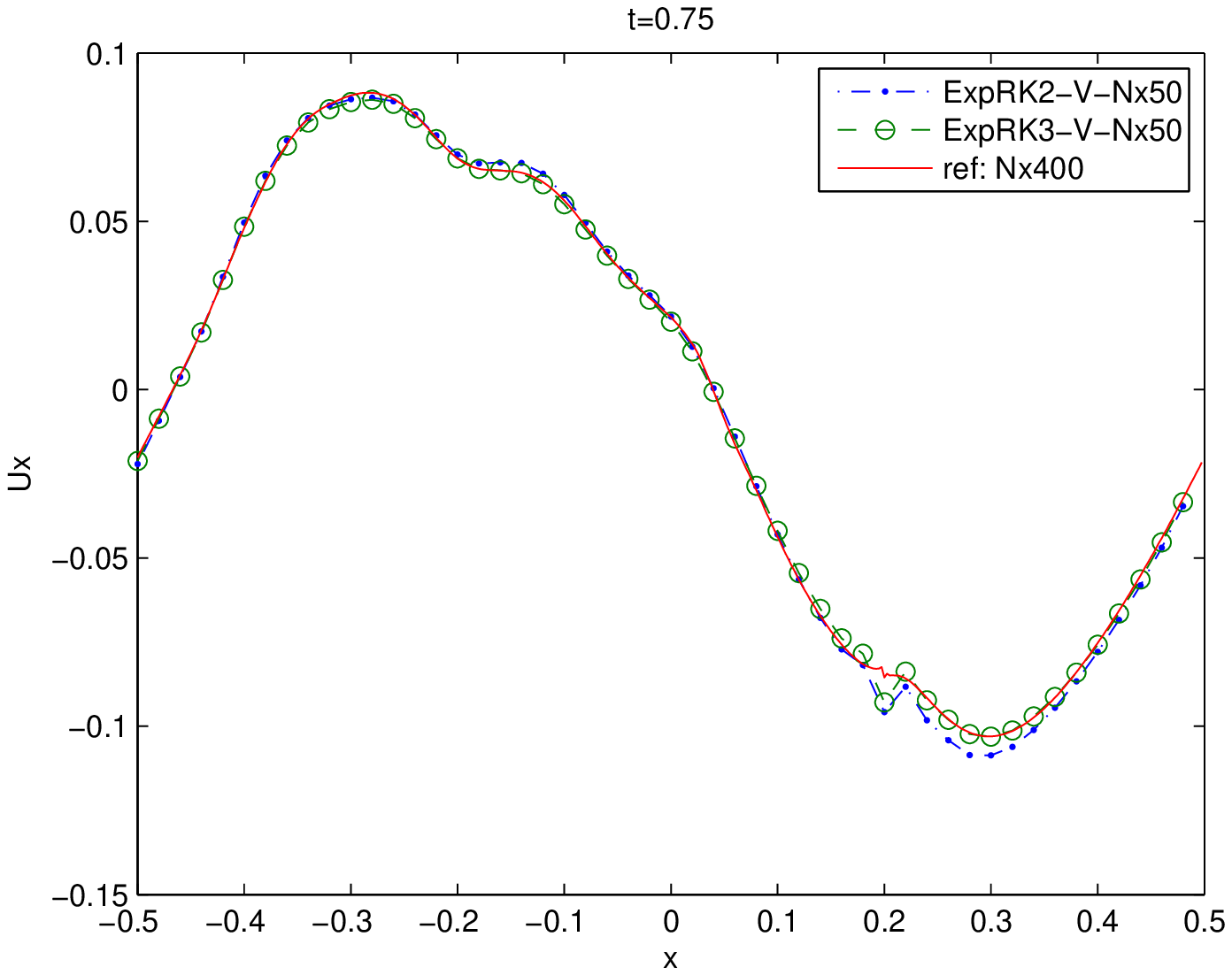}
    \end{subfigure}
     \begin{subfigure}\centering
        \includegraphics[width=0.5\textwidth,height=0.25\textheight]{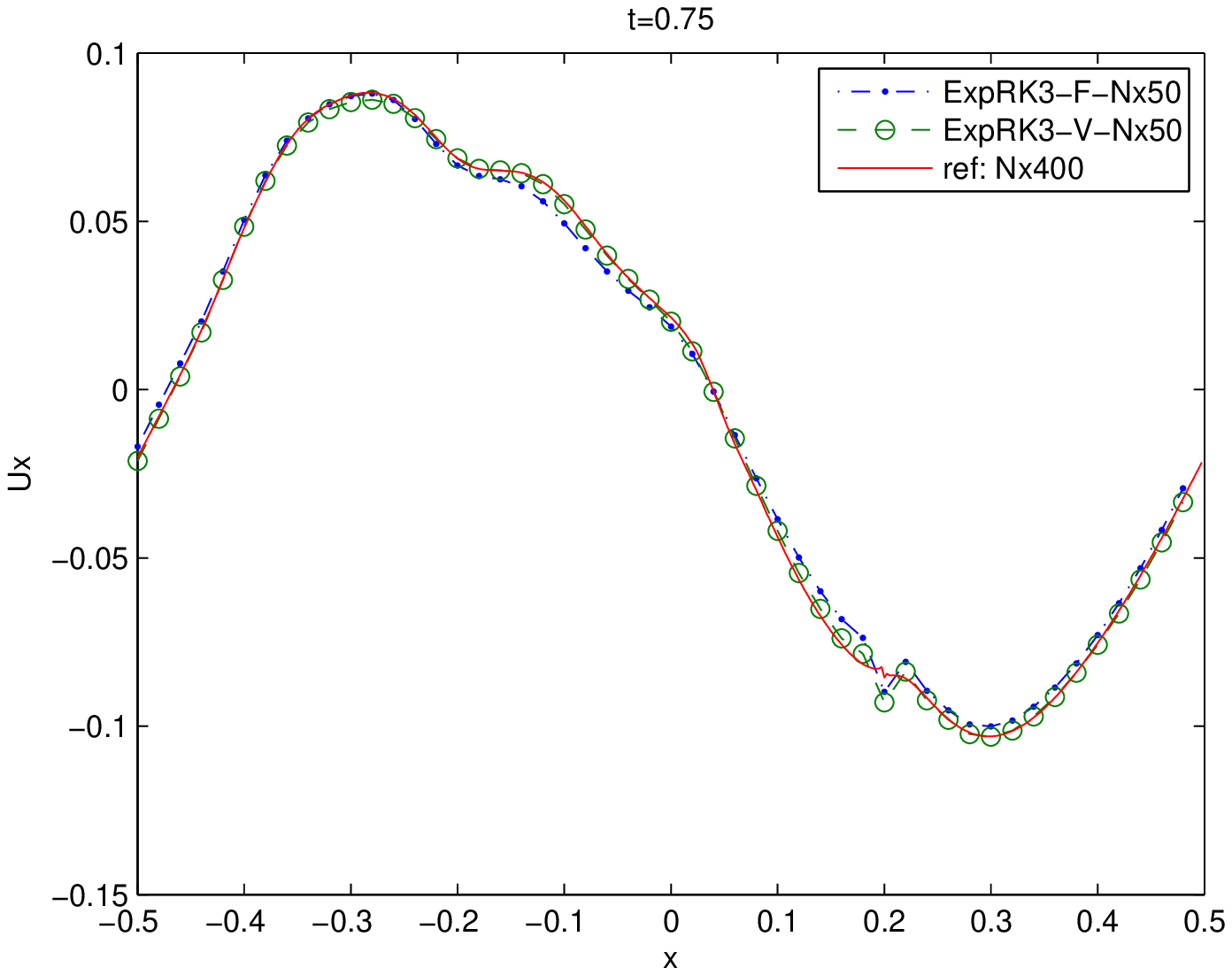}
     \end{subfigure}\begin{subfigure}\centering
        \includegraphics[width=0.5\textwidth,height=0.25\textheight]{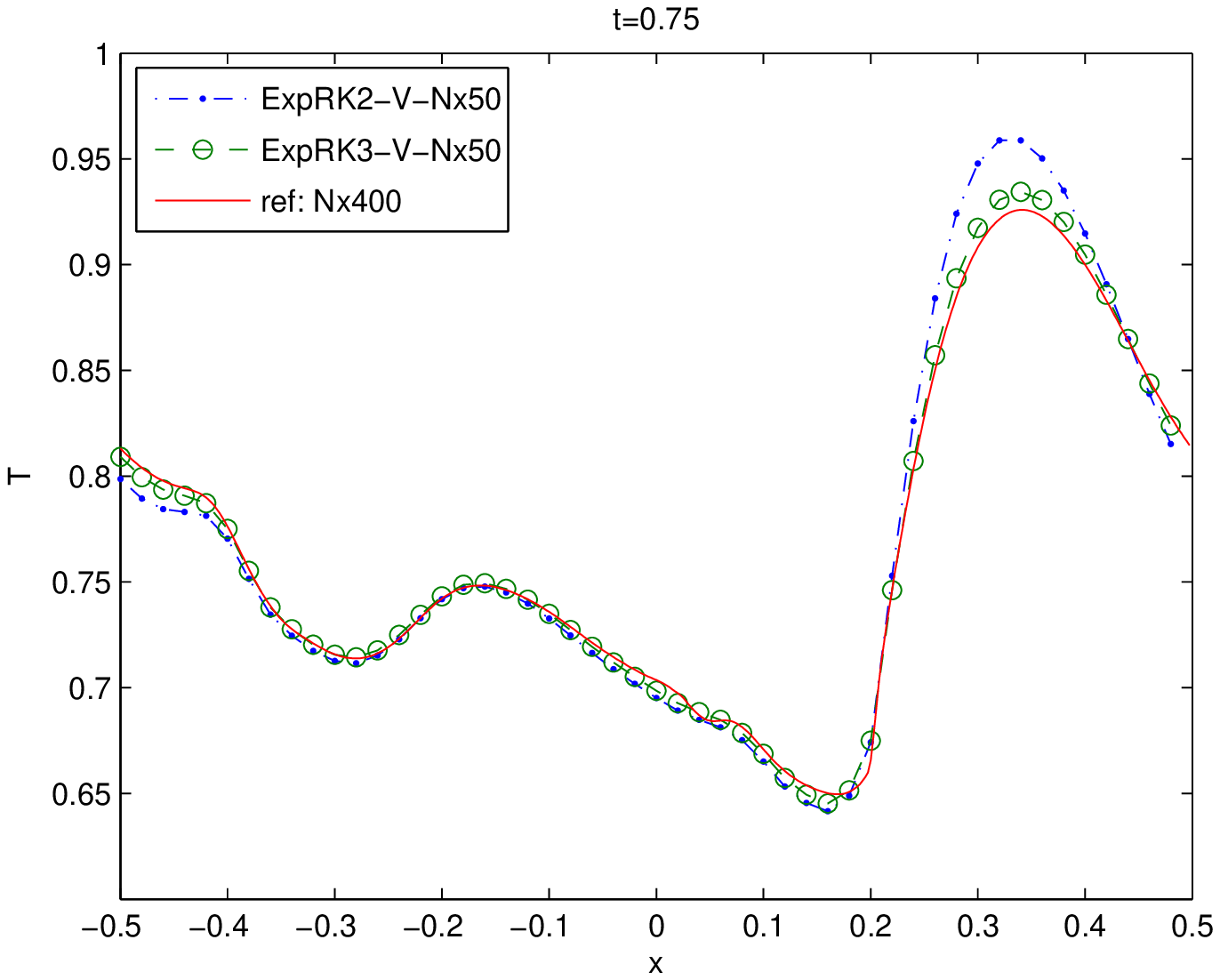}
    \end{subfigure}\begin{subfigure}\centering
        \includegraphics[width=0.5\textwidth,height=0.25\textheight]{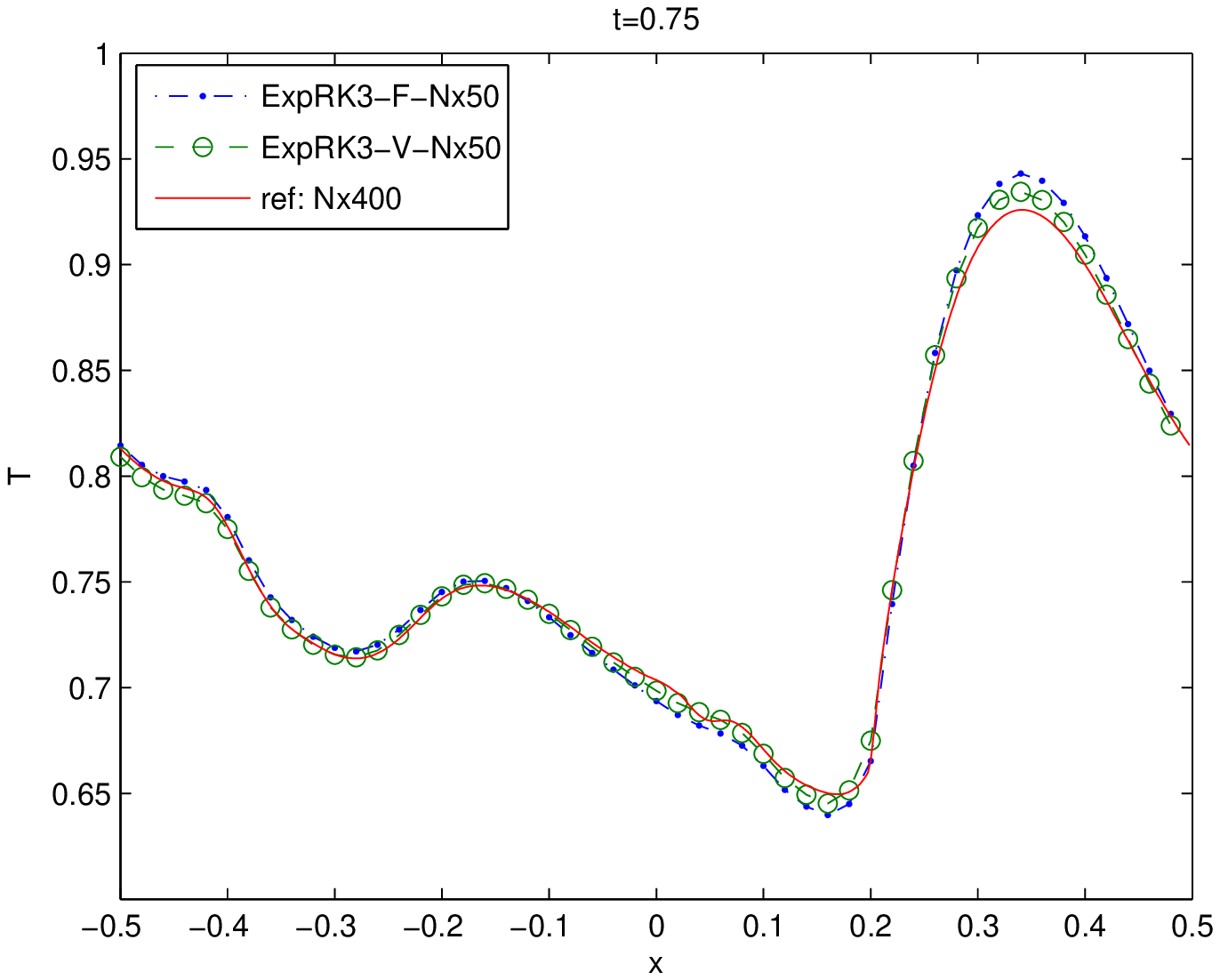}
    \end{subfigure}
   \caption{The left column shows comparison of RK2 and RK3 using the ExpRK-V. 
       The solid line is the reference solution with a very fine mesh in time and $\Delta x=0.005$, the dash line is given by RK3 and the 
       dotted line is given by RK2, both with $N_x=50$ points. The right 
       column compare two methods, both given by RK3, with the reference. The dash line 
       is given by ExpRK-V, and the dotted line is given by ExpRK-F. $N_x=50$ 
       for both. $h$ is chosen to satisfy CFL condition, in our case, the CFL 
   number is chosen to be $0.5$.}\label{fig_Ex3_ep3}
\end{figure}

\section{Conclusions and future developments}
In this paper we have presented a general way to construct high-order time discretization methods for the Boltzmann equations in stiff regimes which avoid the inversion of the collision operator. 
The main advantages compared to other methods presented in the literature is the capability to achieve high order uniformly with respect to the small Knudsen number and to originate monotone schemes thanks to the exponential structure of the coefficients. The approach presented here can be extended in principle to several other integro-differential kinetic equations where it is possible to identify a linear operator which preserves the asymptotic behavior of the system. For example in the case of the Landau equation this would involve the computation of the exact flow of the linear part, i.e. a matrix exponential, in the construction of the schemes.
We leave this possibility to future research.

\section*{Acknowledgements}
The first author would like to
thank Dr. G. Dimarco and Prof. S. Jin for stimulating discussions, and Dr.
Bokai Yan for providing the code of spectral method for the collision term.
\section{Appendix}
\subsection{Positivity of the mass density in ExpRK-V}
\begin{theorem}
    The method ExpRK-V defined by (\ref{scheme_M2}) gives positive $\rho$, and the negative part of $T$ is at most of order $O(h\varepsilon)$.
\end{theorem}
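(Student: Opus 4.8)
The plan is to reduce both statements to properties of the macroscopic update contained in the second equation of (\ref{scheme_M2}), and then to realize the updated moments as the moments of an explicitly nonnegative auxiliary distribution. Write $\langle g\rangle=\int g\,dv$. The first step is a consistency identity at every sub-stage: multiplying the kinetic part of (\ref{scheme_M2StepK})--(\ref{scheme_M2Final}) by $\phi=(1,v,|v|^2/2)^T$ and integrating, one uses $\langle\phi Q\rangle=0$ (hence $\langle\phi P^{(j)}\rangle=\mu\langle\phi f^{(j)}\rangle$) together with the defining relation $\langle\phi\,\partial_tM^{(j)}\rangle=-\langle\phi\,v\cdot\nabla_xf^{(j)}\rangle$ coming from (\ref{scheme_ptM}); after these cancellations, setting $\xi_i=(\langle\phi f^{(i)}\rangle-\langle\phi M^{(i)}\rangle)e^{c_i\lambda}$ one is left with the closed recursion $\xi_i=\xi_0+\lambda\sum_{j<i}a_{ij}\xi_j$ with $\xi_0=0$, so a trivial induction gives $\langle\phi f^{(i)}\rangle=\langle\phi M^{(i)}\rangle$ for all sub-stages and for $f^{n+1}$. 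In particular $\rho^{(i)}=\rho^{(i)}_M$, $u^{(i)}=u^{(i)}_M$, $E^{(i)}=E^{(i)}_M$, $T^{(i)}=T^{(i)}_M$, so it suffices to control the quantities produced by the moment equation.

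For positivity of $\rho$ I would rewrite the moment equation in Shu-Osher form: for $\phi=1$ this reads $\rho^{(i)}=\sum_{j<i}\alpha_{ij}\big(\rho^{(j)}-h\tfrac{\beta_{ij}}{\alpha_{ij}}\langle v\cdot\nabla_xf^{(j)}\rangle\big)$. By the consistency identity $\langle v f^{(j)}\rangle=\langle vM^{(j)}\rangle$, hence $\langle v\cdot\nabla_xf^{(j)}\rangle=\langle v\cdot\nabla_xM^{(j)}\rangle$ and $\rho^{(j)}=\langle M^{(j)}\rangle$, so the $j$-th summand equals $\big\langle M^{(j)}-h\tfrac{\beta_{ij}}{\alpha_{ij}}v\cdot\nabla_xM^{(j)}\big\rangle$. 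Since each $M^{(j)}\ge0$ is a Gaussian, Assumption 1 applied to $M^{(j)}$ (with $\alpha_{ij},\beta_{ij}\ge0$ and $0<h\le h_*$, the same $h_*$ as in Theorem 1) shows that $\tilde f^{(i)}:=\sum_{j<i}\alpha_{ij}\big(M^{(j)}-h\tfrac{\beta_{ij}}{\alpha_{ij}}v\cdot\nabla_xM^{(j)}\big)\ge0$, and then $\rho^{(i)}=\langle\tilde f^{(i)}\rangle\ge0$. An induction on $i$ (base case $\rho^n=\rho^n_M\ge0$), and the identical argument for the final step, give $\rho^{n+1}\ge0$.

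For the negative part of $T$, the same computation with $\phi=v$ and $\phi=|v|^2/2$ shows that the momentum and energy updates equal the corresponding moments of $\tilde f^{(i)}$ up to the divergence of the non-equilibrium stress $\langle(v-u^{(j)})\otimes(v-u^{(j)})(f^{(j)}-M^{(j)})\rangle$ and of the heat-flux-type tensor $\langle\tfrac{|v|^2}{2}v(f^{(j)}-M^{(j)})\rangle$, which are $O(\varepsilon)$ in the stiff regime by Chapman--Enskog (more precisely, controlled by $\|f^{(j)}-M^{(j)}\|$). Hence $(\rho u)^{(i)}=\langle v\tilde f^{(i)}\rangle+O(h\varepsilon)$ and $E^{(i)}=\langle\tfrac{|v|^2}{2}\tilde f^{(i)}\rangle+O(h\varepsilon)$, while $\rho^{(i)}=\langle\tilde f^{(i)}\rangle$ exactly. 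Because $\tilde f^{(i)}\ge0$, the internal energy built from its own moments is nonnegative, $\langle\tfrac{|v|^2}{2}\tilde f^{(i)}\rangle-\tfrac{|\langle v\tilde f^{(i)}\rangle|^2}{2\langle\tilde f^{(i)}\rangle}=\tfrac12\langle|v-\tilde u^{(i)}|^2\tilde f^{(i)}\rangle\ge0$; propagating the $O(h\varepsilon)$ perturbations through this identity and using that $\rho^{(i)}$ is bounded away from $0$, one gets $\tfrac d2\rho^{(i)}T^{(i)}=E^{(i)}-\tfrac{|(\rho u)^{(i)}|^2}{2\rho^{(i)}}\ge-O(h\varepsilon)$, hence $T^{(i)}\ge-O(h\varepsilon)$, and likewise $T^{n+1}\ge-O(h\varepsilon)$.

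The cancellations in the consistency step and the Shu-Osher bookkeeping are routine. The main obstacle is the last step: one must identify the defect between the scheme's momentum/energy updates and the moments of the nonnegative $\tilde f^{(i)}$ precisely as the non-equilibrium stress and heat flux, bound it by $O(\varepsilon)$, and then control the $1/\rho^{(i)}$ factor --- which requires $\rho$ bounded below, not merely positive --- when converting the internal-energy bound into the temperature bound.
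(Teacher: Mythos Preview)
Your proof is correct and follows the same route as the paper: first establish that $f^{(i)}$ and $M^{(i)}$ share moments at every stage, then rewrite the macroscopic update in Shu--Osher form and split the fluxes into a Maxwellian part plus an $O(h\varepsilon)$ non-equilibrium remainder coming from $f^{(j)}-M^{(j)}$. Your explicit nonnegative auxiliary distribution $\tilde f^{(i)}=\sum_{j}\alpha_{ij}\bigl(M^{(j)}-h\tfrac{\beta_{ij}}{\alpha_{ij}}v\cdot\nabla_xM^{(j)}\bigr)$ is precisely the ``classical method to prove $E>\tfrac{\rho u^2}{2}$ in Runge--Kutta schemes'' that the paper invokes by reference, so the two arguments coincide; if anything your treatment is slightly more complete, since you also write out the $O(h\varepsilon)$ momentum defect, and you correctly flag the need for a lower bound on $\rho^{(i)}$, a point the paper does not address.
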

To prove this theorem, we firstly check the following lemma.
\begin{lemma}
    In each sub-stage, the distribution function $f^{(i)}$ and $M^{(i)}$ have
    the same first $d+2$ moments.
\end{lemma}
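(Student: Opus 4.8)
The plan is to prove the equivalent statement $\int\phi f^{(i)}\,dv=\int\phi M^{(i)}\,dv$, with $\phi=\left(1,v,\tfrac{1}{2}|v|^2\right)^T$, by a double induction: on the sub-stage index $i$ inside a fixed time step, and on the time step index. The base case at the level of time steps is that $M^n$ is the local Maxwellian of $f^n$, so that $\int\phi f^n\,dv=\int\phi M^n\,dv$ (trivial at the initial time, and, as we will see, propagated by the final step); within a time step we take as inductive hypothesis that $\int\phi f^{(k)}\,dv=\int\phi M^{(k)}\,dv$ for all earlier sub-stages $k<i$, and we set $f^{(0)}:=f^n$, $M^{(0)}:=M^n$.

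First I would multiply the kinetic equation in (\ref{scheme_M2StepK}) by $\phi$ and integrate in $v$. Three elementary facts make the entire right-hand sum collapse. (i) The collision invariants (\ref{eqn_momentsMaxwell}) give $\int\phi\,Q(f^{(j)},f^{(j)})\,dv=0$, hence $\int\phi P^{(j)}\,dv=\mu\int\phi f^{(j)}\,dv$ and therefore $\int\phi\,(P^{(j)}-\mu M^{(j)})\,dv=\mu\big(\int\phi f^{(j)}\,dv-\int\phi M^{(j)}\,dv\big)=0$ by the inductive hypothesis. (ii) The construction of $\partial_t M^{(j)}$ in (\ref{scheme_ptM})--(\ref{scheme_ptT}): since a Gaussian with macroscopic parameters $(\rho,u,T)$ satisfies $\int\phi M\,dv=\left(\rho,\ \rho u,\ \tfrac{d}{2}\rho T+\tfrac{1}{2}\rho u^2\right)^T$, the chain rule $\partial_t M=\partial_\rho M\,\partial_t\rho+\nabla_u M\cdot\partial_t u+\partial_T M\,\partial_t T$ gives $\int\phi\,\partial_t M^{(j)}\,dv=\partial_t\big(\rho^{(j)},\ \rho^{(j)}u^{(j)},\ E^{(j)}\big)^T$, and by design the right-hand sides of (\ref{scheme_ptRho})--(\ref{scheme_ptT}) equal $-\int\phi\,v\cdot\nabla_x f^{(j)}\,dv$; hence $\int\phi\,(v\cdot\nabla_x f^{(j)}+\partial_t M^{(j)})\,dv=0$ and the transport term cancels the $\partial_t M$ term. (iii) The time-step base case gives $\int\phi\,(f^n-M^n)\,dv=0$.

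Recalling $\lambda=\mu h/\varepsilon$, after these cancellations the moment identity obtained from the first line of (\ref{scheme_M2StepK}) reduces to $e^{c_i\lambda}\big(\int\phi f^{(i)}\,dv-\int\phi M^{(i)}\,dv\big)=0$, i.e. $\int\phi f^{(i)}\,dv=\int\phi M^{(i)}\,dv$, which advances the induction in $i$. The same computation applied to the final step (\ref{scheme_M2Final}) yields $\int\phi f^{n+1}\,dv=\int\phi M^{n+1}\,dv$, so $M^{n+1}$ is the local Maxwellian of $f^{n+1}$ and the base case is propagated to the next time step; this is precisely the self-consistency of the kinetic update with the moment update used to build $M^{(i)}$.

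The step I expect to be the main obstacle is item (ii): one has to verify carefully that the chain-rule reconstruction of $\partial_t M^{(j)}$ is compatible with (\ref{scheme_ptRho})--(\ref{scheme_ptT}), i.e. that after substituting $\partial_t\rho^{(j)}$ and $\partial_t u^{(j)}$ the formula (\ref{scheme_ptT}) for $\partial_t T^{(j)}$ is consistent with the energy component of the identity $\int\phi\,\partial_t M^{(j)}\,dv=-\int\phi\,v\cdot\nabla_x f^{(j)}\,dv$, using $E=\tfrac{d}{2}\rho T+\tfrac{1}{2}\rho u^2$ and the Gaussian moment formulas. This is a routine but slightly lengthy algebraic verification; everything else in the argument is bookkeeping.
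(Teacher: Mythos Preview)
Your proof is correct and follows essentially the same approach as the paper: both proceed by induction on the sub-stage index, take the $\phi$-moments of the first line of (\ref{scheme_M2StepK}), and show the right-hand side vanishes via the same three cancellations (the $f^n-M^n$ term by the base case, the $P^{(j)}-\mu M^{(j)}$ term by the collision invariants plus the inductive hypothesis, and the transport--$\partial_t M$ term by the construction (\ref{scheme_ptM})). Your write-up is in fact a bit more explicit than the paper's about the outer induction on time steps and about why the $\partial_t M^{(j)}$ term has the right moments, but the argument is the same.
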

\begin{proof}
    We prove this for sub-stage $i$. Assume for $\forall j<i$, one has
    \begin{equation}\label{scheme_M2_assumption}
      \int\left(\begin{array}{c}1\\v\\\frac{v^2}{2}\end{array}\right)(f^{(j)}-M^{(j)})dv=0.
    \end{equation}
    Then, one could take moments of the first equation in the scheme
    (\ref{scheme_M2StepK}), and gets
    \begin{subequations}
    \begin{align}
        \int\left(\begin{array}{c}1\\v\\\frac{v^2}{2}\end{array}\right)(f^{(i)}-M^{(i)})e^{c_i\lambda}dv =&\int\left(\begin{array}{c}1\\v\\\frac{v^2}{2}\end{array}\right)\left(f^{n}-M^n\right)dv\label{scheme_M2_moments1}\\
                                                                                                          &+\displaystyle\sum^{i-1}_{j=1}a_{ij}\frac{\lambda}{\mu}e^{c_j\lambda}\int\left(\begin{array}{c}1\\v\\\frac{v^2}{2}\end{array}\right)\left( P^{(j)}-\mu M^{(j)}\right)dv \label{scheme_M2_moments2}\\
                                                                                                          &-\displaystyle\sum^{i-1}_{j=1}a_{ij}\frac{\lambda}{\mu}e^{c_j\lambda}\int\left(\begin{array}{c}1\\v\\\frac{v^2}{2}\end{array}\right)\left(\varepsilon v\cdot\nabla_xf^{(j)}-\varepsilon\partial_tM^{(j)}\right)dv\label{scheme_M2_moments3}
    \end{align}
\end{subequations}
(\ref{scheme_M2_moments1}) is zero for sure, (\ref{scheme_M2_moments2}) is zero
by definition of $P$ and (\ref{scheme_M2_assumption}).
(\ref{scheme_M2_moments3}) is zero because of the computation from
(\ref{scheme_ptM}). Thus it is obvious that $f^{(i)}$ and $M^{(i)}$ share the
same moments on each stage.
\end{proof}
With the previous lemma in hand, one could prove {Theorem 4}.
\begin{proof}
    As in the previous lemma, we only do the proof for sub-stage $i$. The
    final step can be dealt with in the same way. Rewrite the second equation
    of (\ref{scheme_M2StepK}) in Shu-Osher representation
    \begin{equation}
        \int \phi f^{(i)}dv=\sum_{j=1}^{i-1}\left(\alpha_{ij}\int\phi
    f^{(j)}dv+\beta_{ij}h\int\phi v\cdot\nabla_xf^{(j)}dv\right)
    \end{equation}
    This moment equation is the same as the equation on $\rho$ in the Euler
    system, and the classical proof for $\rho$ being positive for the Euler
    equation can just be adopted \cite{GottliebShu_TVDRK}. To check the
    positivity of $T$, one just need to make use of the last line of the
    moment equation, i.e.
    \begin{subequations}
    \begin{align}
        \int \frac{v^2}{2}f^{(i)}dv=&\sum_{j=1}^{i-1}\left(\alpha_{ij}\int\frac{v^2}{2}f^{(j)}dv+\beta_{ij}h\int\frac{v^2}{2} v\cdot\nabla_xf^{(j)}dv\right)\nonumber\\
                                   =&\sum_{j=1}^{i-1}\left(\alpha_{ij}\int\frac{v^2}{2}f^{(j)}dv+\beta_{ij}h\int\frac{v^2}{2} v\cdot\nabla_xM^{(j)}dv\right)\label{scheme_M2_pos1}\\
                                    &+h\sum_{j=1}^{i-1}\beta_{ij}\int\frac{v^2}{2} v\cdot\nabla_x\left(f^{(j)}-M^{(j)}\right)dv\label{scheme_M2_pos2}
\end{align}
\end{subequations}
(\ref{scheme_M2_pos1}) is exactly what one could get when computing for $E$ in the Euler
system: the form of $M$ closes it up. So the classical method to
prove that $E>\frac{\rho u^2}{2}$ in Runge-Kutta scheme could be used, and the
only thing new is from (\ref{scheme_M2_pos2}). However, as proved in the section
about AP, the difference between $f$ and $M$ is at most of $\varepsilon$, thus
(\ref{scheme_M2_pos2}) is of order $O(h\varepsilon)$.
\end{proof}

\subsection{$\left|P(f)-P(g)\right|\leq\left|f-g\right|$ in $d_2$ norm}
We adopt the results from \cite{TV_ProbMetrics}.
They denote $\mathrm{P}_2$ the collection of distributions $F$ such that
\begin{equation*}
    \int_{R^d}|v|^2dF(v)<\infty
\end{equation*}
A metric $\mathrm{d}_2$ on $\mathrm{P}_2$ is defined by
\begin{equation}
    \mathrm{d}_2(F,G)=\text{sup}_{\xi}\frac{\hat{f}(\xi)-\hat{g}(\xi)}{|\xi|^2}
\end{equation}
where $\hat{f}$ is the Fourier transform of $F$
\begin{equation*}
    \hat{f}(\xi)=\int e^{-i\xi\cdot v}dF(v)
\end{equation*}
One can transform the Boltzmann equation into its Fourier space and obtains\cite{PT_FourierBoltzmann,Bobylev_FourierMaxwellBoltzmann}
\begin{equation}\label{eqn_Boltzmann_Fourier}
    \partial_t\hat{f}(t,\xi)=\int_{S^2}B\left(\frac{\xi\cdot n}{|\xi|}\right)\left[\hat{f}(\xi^+)\hat{f}^(\xi^-)-\hat{f}^(\xi)\hat{f}(0)\right]dn
\end{equation}
where $\xi^\pm=\frac{\xi\pm|\xi|n}{2}$
\begin{theorem}
    $\mathrm{d}_2(P_f,P_g)<\mathrm{d}_2(f,g)$ for Maxwell molecules with cut-off collision kernel.
\end{theorem}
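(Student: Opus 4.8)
The plan is to pass to Fourier variables, where the Maxwell‑molecule operator is an explicit quadratic expression, and to exploit the scale homogeneity built into $\mathrm{d}_2$; recall $\mathrm{d}_2(F,G)=\sup_{\xi}|\hat f(\xi)-\hat g(\xi)|/|\xi|^{2}$, and for a probability density $\phi$ one has $|\hat\phi|\le 1$, $\hat\phi(0)=1$. I first note that $\mathrm{d}_2(f,g)<\infty$ forces $f$ and $g$ to have the same mass, momentum and energy — automatic in the application, where $g=M$ is the local Maxwellian attached to $f$ — so, by the conservation identities \eqref{eqn_momentsMaxwell}, $Q(f,f)$ and $Q(g,g)$ have vanishing first $d+2$ moments and hence $P_f:=P(f,f)$ and $P_g$ differ by a measure with zero first $d+2$ moments, keeping $\mathrm{d}_2(P_f,P_g)$ finite; for $\mu\ge\mu_*=\sup|Q^-|$ the rescaled $\mu^{-1}P_f$ is moreover a genuine probability measure. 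For a cut‑off Maxwell kernel the loss frequency is constant, $Q^-\equiv C_B:=\int_{S^{d-1}}B\,d\omega$, so by \eqref{eqn_Boltzmann_Fourier} and $\hat f(0)=1$,
\begin{equation*}
\hat P_f(\xi)=\int_{S^{d-1}}B\!\left(\tfrac{\xi\cdot n}{|\xi|}\right)\hat f(\xi^{+})\hat f(\xi^{-})\,dn+(\mu-C_B)\hat f(\xi),\qquad \xi^{\pm}=\tfrac{\xi\pm|\xi|n}{2}.
\end{equation*}

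Next I subtract the same identity for $g$ and linearize each product via $ab-cd=(a-c)b+c(b-d)$; setting $h:=\hat f-\hat g$ and using $|\hat f|,|\hat g|\le 1$ gives, for every $\xi$,
\begin{equation*}
\bigl|\hat P_f(\xi)-\hat P_g(\xi)\bigr|\le\int_{S^{d-1}}B\!\left(\tfrac{\xi\cdot n}{|\xi|}\right)\bigl(|h(\xi^{+})|+|h(\xi^{-})|\bigr)\,dn+(\mu-C_B)\,|h(\xi)|.
\end{equation*}
Dividing by $|\xi|^{2}$, the single structural ingredient is the geometric identity $|\xi^{+}|^{2}+|\xi^{-}|^{2}=|\xi|^{2}$ (the Fourier image of energy conservation): since $|h(\xi^{\pm})|/|\xi^{\pm}|^{2}\le\mathrm{d}_2(f,g)$ by definition,
\begin{equation*}
\frac{|h(\xi^{+})|}{|\xi|^{2}}+\frac{|h(\xi^{-})|}{|\xi|^{2}}\le\mathrm{d}_2(f,g)\,\frac{|\xi^{+}|^{2}+|\xi^{-}|^{2}}{|\xi|^{2}}=\mathrm{d}_2(f,g),
\end{equation*}
and therefore, using $\int_{S^{d-1}}B\,dn=C_B$ and $|h(\xi)|/|\xi|^{2}\le\mathrm{d}_2(f,g)$,
\begin{equation*}
\frac{|\hat P_f(\xi)-\hat P_g(\xi)|}{|\xi|^{2}}\le C_B\,\mathrm{d}_2(f,g)+(\mu-C_B)\,\mathrm{d}_2(f,g)=\mu\,\mathrm{d}_2(f,g).
\end{equation*}
Taking the supremum over $\xi$ yields $\mathrm{d}_2(P_f,P_g)\le\mu\,\mathrm{d}_2(f,g)$, i.e. $\mathrm{d}_2(\mu^{-1}P_f,\mu^{-1}P_g)\le\mathrm{d}_2(f,g)$; equivalently, $\mu^{-1}P_f$ is a convex combination of the normalized gain term $C_B^{-1}Q^{+}_f$ and $f$, on which the same computation shows $\mathrm{d}_2$ is non‑expansive. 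This already supplies Assumption \ref{assumption_Pnorm} with $C$ as large as desired.

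For the strict inequality one must track where the chain above can be saturated. The bounds $|\hat f|,|\hat g|\le 1$ are strict away from Dirac masses, and the weights $|\xi^{\pm}|^{2}/|\xi|^{2}=\tfrac12(1\pm\hat\xi\cdot n)$ lie strictly in $(0,1)$ for all $n$ off the null set $\{n=\pm\hat\xi\}$, so the step $|h(\xi^{+})|/|\xi|^{2}\le\mathrm{d}_2(f,g)\,|\xi^{+}|^{2}/|\xi|^{2}$ can be an equality for a.e.\ $n$ only if $|h(\cdot)|/|\cdot|^{2}$ attained the supremum $\mathrm{d}_2(f,g)$ simultaneously along a large family of frequencies — which is incompatible with $h(\zeta)=O(|\zeta|^{2})$ near $\zeta=0$ (keeping the ratio below the supremum there) and with $|h(\zeta)|\le 2$ at infinity. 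Making this quantitative, in the style of the Maxwell‑molecule contraction analysis of \cite{TV_ProbMetrics}, shows that the supremum defining $\mathrm{d}_2(P_f,P_g)$ is strictly below $\mathrm{d}_2(f,g)$ whenever $f\neq g$ and $B$ is not concentrated on $\{n=\pm\hat\xi\}$. I expect this last step — upgrading the clean ``$\le$'' to a strict contraction while controlling the supremum uniformly, including the origin and infinity — to be the main obstacle; everything preceding it is the elementary Fourier bookkeeping above.
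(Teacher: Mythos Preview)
Your approach is essentially the same as the paper's: pass to the Bobylev Fourier representation, split $P=Q^{+}+(\mu-Q^{-})f$, and bound the gain part via the Toscani--Villani contraction estimate for $\mathrm{d}_2$; the only difference is that you unpack that estimate explicitly (the $ab-cd$ decomposition together with $|\xi^{+}|^{2}+|\xi^{-}|^{2}=|\xi|^{2}$), whereas the paper simply cites \cite{TV_ProbMetrics} for the bound $\bigl|\hat f(\xi^{+})\hat f(\xi^{-})-\hat g(\xi^{+})\hat g(\xi^{-})\bigr|/|\xi|^{2}\le\mathrm{d}_2(f,g)$ and concludes $\mathrm{d}_2(Q^{+}_f,Q^{+}_g)\le S\,\mathrm{d}_2(f,g)$. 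Your final paragraph on upgrading $\le$ to strict $<$ actually goes beyond what the paper's proof establishes---despite the theorem's stated strict inequality, the paper's argument only yields the Lipschitz bound needed for Assumption~\ref{assumption_Pnorm}, so you need not worry about that step.
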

\begin{proof}
    For Maxwell molecule with cut-off collision kernel $\int B=S$. Thus
    $$\text{sup}|Q^-|=\text{sup}\left|\int Bf_*d\Omega dv_*\right|=\text{sup}|\rho S|<\infty.$$
    Considering $P=Q+\mu f=Q^++\left(\mu-Q^-\right)f$, it is enough to prove
    $\mathrm{d}_2(Q^+_f,Q^+_g)<C\mathrm{d}_2(f,g)$ for $C$ big enough. Given
    $$\hat{Q}^+_f=\int_{S^2}B\left(\frac{\xi\cdot n}{|\xi|}\right)\left[\hat{f}(\xi^+)\hat{f}^(\xi^-)\right]dn,$$
    one has
    $$\frac{\hat{Q}^+_f-\hat{Q}^+_g}{|\xi|^2}=\int_{S^2}B\left(\frac{\xi\cdot n}{|\xi|}\right)\left[\frac{\hat{f}(\xi^+)\hat{f}(\xi^-)-\hat{g}(\xi^+)\hat{g}(\xi^-)}{|\xi|^2}\right]dn$$
    From \cite{TV_ProbMetrics}, one gets
    $$\left|\frac{\hat{f}(\xi^+)\hat{f}(\xi^-)-\hat{g}(\xi^+)\hat{g}(\xi^-)}{|\xi|^2}\right|\leq\text{sup}\left|\frac{\hat{f}-\hat{g}}{|\xi|^2}\right|$$
    Thus, one has:
    \begin{equation*}
        \mathrm{d}_2(Q^+_f,Q^+_g)=\text{sup}_\xi\left|\frac{\hat{Q}^+_f-\hat{Q}^+_g}{|\xi|^2}\right|\leq S\sup\left|\frac{\hat{f}-\hat{g}}{|\xi|^2}\right|=S\mathrm{d}_2(f,g)
    \end{equation*}
\end{proof}

\end{document}